\renewcommand{\theequation}{\arabic{section}.\arabic{equation}}
\newtheorem{theorem}{Theorem}[section]
\newtheorem{lemma}[theorem]{Lemma}
\newtheorem{corollary}[theorem]{Corollary}
\newtheorem{definition}[theorem]{Definition}
\theoremstyle{remark}
\newtheorem{remark}[theorem]{Remark}
\newtheorem{example}[theorem]{Example}
\numberwithin{equation}{section}
\newcommand{\Om}{\Omega}
\newcommand{\om}{\omega}
\newcommand{\iy}{\infty}
\newcommand{\dd}{\delta}
\newcommand{\ga}{\gamma}
\newcommand{\la}{\lambda}
\newcommand{\R}{\mathbb R}
\newcommand{\al}{\alpha}
\newcommand{\tu}{\tilde u}
\newcommand{\bu}{\bar u}
\newcommand{\bb}{\beta}
\newcommand{\e}{\varepsilon}
\newcommand{\bt}{\begin{theorem}}
\newcommand{\et}{\end{theorem}}
\newcommand{\bl}{\begin{lemma}}
\newcommand{\el}{\end{lemma}}
\newcommand{\bd}{\begin{definition}}
\newcommand{\ed}{\end{definition}}
\newcommand{\bc}{\begin{corollary}}
\newcommand{\ec}{\end{corollary}}
\newcommand{\bx}{\begin{example}}
\newcommand{\ex}{\end{example}}
\newcommand{\br}{\begin{remark}}
\newcommand{\er}{\end{remark}}
\newcommand{\be}{\begin{equation}}
\newcommand{\ee}{\end{equation}}
\begin{document}

\title[bubbling solutions to a competitive Chern-Simons model]
{A new type of non-topological bubbling solutions
to a competitive Chern-Simons model}

\author{Zhijie Chen}
\address{Department of Mathematical Sciences, Yau Mathematical Sciences Center, Tsinghua University, Beijing 100084, China}
\email{zjchen@math.tsinghua.edu.cn}
\author{Chang-Shou Lin}
\address{Taida Institute for Mathematical Sciences (TIMS), Center for Advanced Studies in Theoretic Sciences (CASTS), National Taiwan University, Taipei}
\email{cslin@math.ntu.edu.tw}

\thanks{Mathematics Subject Classification: 81T13, 35J47.\\
Emails: zjchen@math.tsinghua.edu.cn (Chen); cslin@math.ntu.edu.tw (Lin)}

\maketitle

\begin{abstract}
We study a non-Abelian Chern-Simons system in $\mathbb{R}^2$, including the simple Lie algebras $A_2$ and $B_2$. In a previous work, we proved the existence of radial non-topological solutions with prescribed asymptotic behaviors via the degree theory. We also constructed a sequence of bubbling solutions with only one component blowing up partially at infinity. In this paper, we construct a sequence of radial non-topological bubbling solutions of another type via the shooting argument. One component of these bubbling solutions locally converge to a non-topological solution of the Chern-Simons-Higgs scalar equation, but both components blow up partially in different regions at infinity at the same time.
This generalizes a recent work by Choe, Kim and the second author, where the $SU(3)$ case (i.e. $A_2$) was studied. Our result is new even for the $SU(3)$ case and also confirms the difference between the $SU(3)$ case and the $B_2$ case.
\end{abstract}

\section{Introduction}
\renewcommand{\theequation}{1.\arabic{equation}}

In this paper, we study a non-Abelian Chern-Simons system of rank 2:
\be\label{eq1-1}
\begin{pmatrix}
    \Delta u_1\\
    \Delta u_2
  \end{pmatrix}
+K\begin{pmatrix}
              e^{u_1} \\
              e^{u_2} \\
            \end{pmatrix}
          -K\begin{pmatrix}
                       e^{u_1} & 0 \\
                       0 & e^{u_2}
                     \end{pmatrix}
          K\begin{pmatrix}
              e^{u_1} \\
              e^{u_2} \\
            \end{pmatrix}
          =\begin{pmatrix}
              4\pi N_1\dd_0 \\
              4\pi N_2\dd_0
            \end{pmatrix}\;\,\text{in $\R^2$},
\ee
where $N_1, N_2$ are non-negative integers, $\dd_0$ denotes the Dirac measure at $0$, and $K=(a_{ij})$ is a $2\times 2$ matrix satisfying
\be\label{eq1-2}a_{11}, a_{22}>0,\;\, a_{12}, a_{21}<0\;\,\text{and}\;\,  a_{11}a_{22}-a_{12}a_{21}>0.\ee
Clearly \eqref{eq1-1} can be considered as a perturbation of the following Liouville system with a singular source:
\be\label{toda}
\begin{pmatrix}
    \Delta u_1\\
    \Delta u_2
  \end{pmatrix}
+\begin{pmatrix}
                      a_{11} & a_{12} \\
                       a_{21} & a_{22}
                     \end{pmatrix}\begin{pmatrix}
              e^{u_1} \\
              e^{u_2} \\
            \end{pmatrix}=\begin{pmatrix}
              4\pi N_1\dd_0 \\
              4\pi N_2\dd_0
            \end{pmatrix}\;\;\text{in $\R^2$}.
\ee
See \cite{ALW1, ALW2, CI}. Under the assumption \eqref{eq1-2}, then in literature, \eqref{toda} is also said to be {\it competitive}, as compared to the {\it cooperative} case where $a_{12}, a_{21}>0$.

In the last few decades, various Chern-Simons field theories \cite{D0} have been widely studied, largely motivated by their applications to the physics of high critical
temperature superconductivity. Another interesting feature of Chern-Simons field theories is that they provide a gauge invariant mechanism of mass
generation \cite{DJT}. These Chern-Simons theories can be reduced to systems of nonlinear partial differential equations, which have posed many mathematically
challenging problems to analysts.
Our first motivation to analyse \eqref{eq1-1} comes from the relativistic non-Abelian self-dual Chern-Simons model, which was proposed by Kao and Lee \cite{KL} and Dunne \cite{D1, D2}. Following \cite{D1, D2}, the relativistic non-Abelian self-dual Chern-Simons model is defined in the $(2+1)$ Minkowski space $\R^{1,2}$, and the gauge group is a compact Lie group with a semi-simple Lie algebra $\mathcal{G}$. The Chern-Simons Lagrangian action density $\mathcal{L}$ in $2+1$ dimensional spacetime involves the Higgs field $\phi$ and the $\mathcal{G}$-valued gauge field $A=(A_0, A_1, A_2)$. In general, the Euler-Lagrangian equation of $\mathcal{L}$ is too difficult to deal with. Therefore, we only consider the energy minimizers of the Lagrangian functional, which turn out to be the solutions of the following self-dual Chern-Simons equations:
\begin{equation}\label{eq1-3}
D_-\phi=0,\quad
F_{+-}=\frac1{\kappa^2}[\phi-[[\phi, \phi^\dag], \phi], \phi^\dag],
\end{equation}
where $D_-=D_1-iD_2$, $\kappa>0$, $F_{+-}=\partial_+A_--\partial_-A_++[A_+, A_-]$ with $A_\pm=A_1\pm i A_2$, $\partial_{\pm}=\partial_{1}\pm i\partial_2$ and $[\cdot, \cdot]$ is the Lie bracket over $\mathcal{G}$.
In \cite{D2}, Dunne considered a simplified form of the self-dual system (\ref{eq1-3}), in which the fields $\phi$ and $A$
are algebraically restricted:
$$\phi=\sum_{a=1}^r\phi^aE_a,$$
where $r$ is the rank of the gauge Lie algebra, $E_a$ is the simple root step operator, and $\phi^a$ are complex-valued functions. Let
$$u_a=\log|\phi^a|,\quad a=1,\cdots, r.$$
Then system (\ref{eq1-3}) can be reduced to the following system of nonlinear partial differential equations
\begin{align}\label{eq1-4}
\Delta u_a+\frac{1}{\kappa^2}\left(\sum_{b=1}^rK_{ab} e^{u_b}
-\sum_{b=1}^r\sum_{c=1}^r e^{u_b}K_{bc}e^{u_c}K_{ac}\right)=4\pi\sum_{j=1}^{N_a}\dd_{p_j^a},\,1\leq a\leq r,
\end{align}
where $K=(K_{ab})$ is the Cartan matrix of a semi-simple Lie algebra, $\{p_j^a\}_{j=1}^{N_a}$ are zeros
of $\phi^a$ $(a=1,\cdots, r)$, and $\dd_p$ denotes the Dirac measure concentrated at $p$ in $\R^2$. See \cite{Y1} for the derivation of (\ref{eq1-4}) from (\ref{eq1-3}). For example, there are three types of Cartan
matrices of rank $2$, which correspond to the semi-simple Lie algebras $A_2$, $B_2$ and $G_2$ respectively:
\begin{equation}\label{eq1-5}\mathcal{A}_2(\text{i.e. SU(3)})=\begin{pmatrix}2&-1\\-1&2\end{pmatrix},\;
\mathcal{B}_2=\begin{pmatrix}2&-1\\-2&2\end{pmatrix},
\;\mathcal{G}_2=\begin{pmatrix}2&-1\\-3&2\end{pmatrix}.\end{equation}

Let $((K^{-1})_{ab})$ denote the inverse of the matrix $K$. Assume that
\be\label{eq1-6}\sum_{b=1}^{r}(K^{-1})_{ab}>0, \quad a=1, 2, \cdots, r.\ee
A solution $\mathbf{u}=(u_1, \cdots, u_r)$ of \eqref{eq1-4} is called a {\it topological solution} if
$$u_a(x)\to \ln\sum_{b=1}^r(K^{-1})_{ab}\quad\text{as $|x|\to +\iy$},\quad a=1, \cdots, r;$$
a solution $\mathbf{u}$ is called a {\it non-topological solution} if
$$u_a(x)\to -\iy\quad\text{as $|x|\to +\iy$},\quad a=1, \cdots, r.$$

For any configuration $p_j^a$ in $\R^2$, the existence of topological solutions to \eqref{eq1-4} was proved by Yang \cite{Y1} in 1997. However,
the existence question of non-topological solutions (and mixed-type solutions, see below) is much more difficult than the one for topological solutions, and has remained open for a long time. Only recently, with the help of the classification result in \cite{LWY}, the first existence result of non-topological solutions to (\ref{eq1-4}) with $K=\mathcal{A}_2, \mathcal{B}_2$ and $\mathcal{G}_2$ have been obtained by Ao, Wei and the second author \cite{ALW1, ALW2} by the finite-dimensional reduction method through a perturbation from the Liouville system \eqref{toda}.
However, the understanding of the structure of non-topological solutions is still far from complete.

\subsection{Main result}
In this paper, we focus on the radially symmetric solutions of \eqref{eq1-4} when all the vortices coincide at the origin. We only consider the rank $2$ and competitive case, namely $K$ is a $2\times 2$ matrix satisfying \eqref{eq1-2}.
Moreover, we may assume, without loss of generality, that $\kappa=1$. Then system \eqref{eq1-4} coincides with \eqref{eq1-1}.
In particular, when $K=\mathcal{A}_2$, then \eqref{eq1-1} becomes the following $SU(3)$ Chern-Simons system
\begin{equation}\label{eq1-7}
\begin{cases}
\begin{split}
\Delta u_1+2(e^{u_1}-2e^{2u_1}+e^{u_1+u_2})-(e^{u_2}-2e^{2u_2}+e^{u_1+u_2})=4\pi N_1\dd_0\\
\Delta u_2+2(e^{u_2}-2e^{2u_2}+e^{u_1+u_2})-(e^{u_1}-2e^{2u_1}+e^{u_1+u_2})=4\pi N_2\dd_0\\
\end{split}\; \text{in $\R^2$}.
\end{cases}
\end{equation}
When $K=\mathcal{B}_2$, then system \eqref{eq1-1} becomes the following $\mathcal{B}_2$ Chern-Simons system
\begin{equation}\label{eq1-24}
\begin{cases}
\begin{split}
&\Delta u_1+2e^{u_1}-e^{u_2}-4e^{2u_1}+2e^{2u_2}=4\pi N_1\dd_0\\
&\Delta u_2+2e^{u_2}-2e^{u_1}-4e^{2u_2}+2e^{u_1+u_2}+4e^{2u_1}=4\pi N_2\dd_0\\
\end{split}\; \text{in $\R^2$}.
\end{cases}
\end{equation}

As in \cite{HL2}, in order to simplify the expression of system \eqref{eq1-1}, we consider the transformation
$$(u_1, u_2)\to \left(u_1+\ln\frac{a_{22}-a_{12}}{a_{11}a_{22}-a_{12}a_{21}},\; u_2+\ln\frac{a_{11}-a_{21}}{a_{11}a_{22}-a_{12}a_{21}}\right)$$
and let
$$(a_1, a_2)=\left(\frac{-a_{12}(a_{11}-a_{21})}{a_{11}a_{22}-a_{12}a_{21}},
\;\frac{-a_{21}(a_{22}-a_{12})}{a_{11}a_{22}-a_{12}a_{21}}\right).$$
Clearly the assumption \eqref{eq1-2} implies $a_1>0$ and $a_2>0$. Then system \eqref{eq1-1} becomes
\begin{equation}\label{problem}
\begin{cases}
\begin{split}
\Delta u_1+(1+a_1)(e^{u_1}-(1+a_1)e^{2u_1}+a_1e^{u_1+u_2})&\\
-a_1(e^{u_2}-(1+a_2)e^{2u_2}+a_2e^{u_1+u_2})=&4\pi N_1\dd_0\\
\Delta u_2+(1+a_2)(e^{u_2}-(1+a_2)e^{2u_2}+a_2e^{u_1+u_2})&\\
-a_2(e^{u_1}-(1+a_1)e^{2u_1}+a_1e^{u_1+u_2})=&4\pi N_2\dd_0\\
\end{split}\;\text{in $\R^2$}.
\end{cases}
\end{equation}
For the three types of Cartan matrices \eqref{eq1-5}, we find that
\be\label{eq1-5-1}(a_1, a_2)=\begin{cases}(1,1),\quad\text{if}\;\,K=\mathcal{A}_2,\\
(2,3),\quad\text{if}\;\,K=\mathcal{B}_2,\\
(5,9),\quad\text{if}\;\,K=\mathcal{G}_2.
\end{cases}\ee
In particular, system \eqref{eq1-7} is invariant under the above transformation.

Clearly, to study system \eqref{eq1-1}, we only need to consider system \eqref{problem}.
It is more interesting to us that, when $(a_1, a_2)$ take {\it some other special values but not} \eqref{eq1-5-1}, system \eqref{problem} also arises in some other physical models, such as the Lozano-Marqu\'{e}s-Moreno-Schaposnik model \cite{LMMS} and the Gudnason model \cite{G1, G2}. Lozano et al. \cite{LMMS} considered the bosonic sector of $\mathcal{N}=2$ supersymmetric Chern-Simons-Higgs theory when the gauge group is $U(1)\times SU(N)$ and has $N_f$ flavors of fundamental matter fields. They investigated so-called local $Z_N$ string-type solutions when $N_f=N$ and obtained a system of nonlinear differential equations (see \cite[(19)-(22)]{LMMS}) which, under a suitable change of variables and unknowns, can be transformed into \eqref{problem} with $(a_1, a_2)=(\frac{k-1}{N}, \frac{(N-1)(k-1)}{N})$ and $k>0$. If $k>1$, then $a_1, a_2>0$ and $a_1+a_2=1$. Gudnason \cite{G1,G2} considered a $\mathcal{N}=2$ supersymmetric Yang-Mills-Chern-Simons-Higgs theory with the general gauge group $G=U(1)\times G'$, where $G'$ is a non-Abelian simple Lie group represented by matrices. When the gauge group are $U(1)\times SO(2M)$ and $U(1)\times US_p(2M)$, the so-called master equations are a system of nonlinear differential equations (see \cite[(3.64)-(3.65)]{G1} or \cite[(2.1)-(2.2)]{HLTY}). Letting $M=1$ and using a suitable transformation, this system coincides with \eqref{problem} with $a_1=a_2=\frac{\bb^*-\al^*}{2\al^*}$ and $\al^*, \bb^*>0$. If $\bb^*>\al^*$, then $a_1=a_2>0$. See \cite{HL2} for these two transformations.

Therefore, it is worth for us to study system \eqref{problem} with generic $a_1, a_2>0$ rather than only \eqref{eq1-5-1}.
As in \cite{HL2}, we easily see that a solution $(u_1, u_2)$ of \eqref{problem} is a {\it topological solution} if $(u_1, u_2)\to (0, 0)$ as $|x|\to +\iy$; a {\it non-topological solution} if $(u_1, u_2)\to (-\iy, -\iy)$ as $|x|\to +\iy$; a {\it mixed-type solution} if either $(u_1, u_2)\to (\ln\frac1{1+a_1}, -\iy)$ or $(u_1, u_2)\to (-\iy, \ln\frac1{1+a_2})$ as $|x|\to +\iy$.

We postpone other mathematical details about \eqref{problem} and first state our main result. To simplify the notations, in the sequel we denote
\be\label{eq1-1-1-1}A=(1+a_1)(1+a_2)\quad\text{and}\quad B=a_1a_2.\ee
Then $A-B=1+a_1+a_2>1$. Denote $B(0, R):=\{x\in \R^2\,|\, |x|<R\}$. The main result of this paper is to prove the existence of {\it a new type of non-topological bubbling solutions}.

\begin{theorem}[=Theorem \ref{th2-1}]\label{th1-1} Assume that $a_1, a_2>0$ satisfy
\be\label{eq2-1-1}3(1+a_1)(1+a_2)-4a_1a_2>0\ee
and $N_1, N_2$ are non-negative integers satisfying
\be\label{eq2-2-1}(A-4B)(N_1+1)<2a_1(1+a_1)(N_2+1)\;\;\text{if}\;\,A-4B>0.\ee
Let $(\al_1, \al_2)$ satisfy $\al_1\ge 1$, $\al_2>1$ and
{\allowdisplaybreaks
\begin{align}
\label{eq1-20-2}&(3A-4B)\al_1
+\frac{1+a_1}{a_2}(A-2B)\al_2\nonumber\\
&\qquad\quad=AN_1+\frac{1+a_1}{a_2}AN_2
+\left(4+2\frac{1+a_1}{a_2}\right)(A-B),\\
\label{eq1-20-3}&\frac{4B-A}{A}(\al_1-1)+\frac{2a_1}{1+a_2}(\al_2-1)-(N_1+1)>0.
\end{align}
}%
Then system \eqref{problem} admits a sequence of radial non-topological bubbling solutions $(u_{1, n}, u_{2, n})$ such that $\sup_{\R^2}u_{2, n}\to-\iy$ as $n\to\iy$.
Furthermore, there exist two intersection points $R_{3,n}\gg R_{1,n}\gg 1$ of $u_{1,n}$ and $u_{2,n}$ such that:
\begin{itemize}
\item[$(i)$] $u_{1, n}\to U$ in $C^2_{loc}(B(0, R_{1,n}))$ as $n\to \iy$, where $U$ is the unique radial solution of
   \be\label{eq1-5-8}\begin{cases}\Delta U+(1+a_1)e^U-(1+a_1)^2 e^{2U}=4\pi N_1\dd_0\;\;\text{in $\R^2$},\\
U(x)=-2\ga \ln|x|+O(1)\;\;\text{as $|x|\to \iy$}\end{cases}\ee
    with
    \be\label{eq1-5-10}\ga=\frac{4B-A}{A}(\al_1-1)+\frac{2a_1}{1+a_2}(\al_2-1)+1.\ee
\item[$(ii)$] $\int_{R_{1,n}}^{R_{3,n}}re^{u_{1,n}}dr\to 0$, $\int_0^{R_{1,n}}re^{u_{2,n}}dr\to 0$, $\int_{R_{3,n}}^{\iy}re^{u_{2,n}}dr\to 0$ and
{\allowdisplaybreaks
\begin{align}& \int_{R_{1,n}}^{R_{3,n}}re^{u_{2,n}}dr\to \frac{4}{1+a_2}\left(\frac{a_2}{1+a_1}(\ga+N_1)+N_2+1\right),\nonumber\\
\label{eq1-41}&\int_{R_{3,n}}^{\iy}re^{u_{1,n}}dr\to \frac{4}{1+a_1}\left(\al_1-1\right)
\end{align}
}%
as $n\to \iy$;
\item[$(iii)$] there exist constants $\al_{1,n}>1, \al_{2,n}>1$ such that
$$u_{k,n}(r)=-2\al_{k,n}\ln r+O(1)\;\;\text{as}\;\,r\to \iy,\;\;k=1,2,$$
and $(\al_{1, n}, \al_{2, n})\to (\al_1, \al_2)$ as $n\to \iy$.
\end{itemize}
\end{theorem}

\subsection{Motivations and remarks}
In order to get a full understanding of Theorem \ref{th1-1} (such as, why we assume \eqref{eq1-20-2}), we will shortly discuss some known results.
Define a quadratic form $J: \R^2\to \R$ by
\be\label{eq1-9}J(x, y)=\frac{a_2(1+a_2)}{2}x^2+a_1a_2xy+\frac{a_1(1+a_1)}{2}y^2.\ee
Recently, in \cite{HL1, HL2}, Huang and the second author classified all radially symmetric solutions of \eqref{eq1-1}.
Among other things, they proved the following interesting result.

\medskip

\noindent{\bf Theorem A.} \cite{HL2} {\it
Let $a_1, a_2>0$ and $(u_1, u_2)\neq (0, 0)$ be a radially symmetric solution of system \eqref{problem}. Then both
$u_1<0$ and $u_2<0$ in $\R^2$, and one of the following conclusions holds.
\begin{itemize}
\item[$(i)$] $(u_1, u_2)$ is a topological solution.
\item[$(ii)$] $(u_1, u_2)$ is a mixed-type solution.
\item[$(iii)$] $(u_1, u_2)$ is a non-topological solution and there exist constants $\al_1, \al_2>1$ such that
\begin{align}\label{eq1-10}u_k(x)=-2\al_k\ln|x|+O(1)\;\;\text{as $|x|\to+\iy$},\quad k=1, 2.\end{align}
Consequently, $e^{u_1}, e^{u_2}\in L^1(\R^2)$. Moreover, $(\al_1, \al_2)$ satisfies
\be\label{eq1-11}J(\al_1-1, \al_2-1)>J(N_1+1, N_2+1).\ee
\end{itemize}
}
\medskip

The inequality \eqref{eq1-11} comes from the following Pohozaev identity (see \cite{HL2}):
\begin{equation}\label{eq1-12}
\begin{split}
&J(\al_1-1, \al_2-1)-J(N_1+1, N_2+1)\\
=&\frac{A-B}{4}\int_{0}^\iy r\left[a_2(1+a_1)e^{2u_1}+a_1(1+a_2)e^{2u_2}-2a_1a_2e^{u_1+u_2}\right]dr.
\end{split}\end{equation}
Therefore, \eqref{eq1-11} is a necessary condition for the existence of radially symmetric non-topological solutions satisfying the asymptotic condition \eqref{eq1-10}.
In view of Theorem A, it is natural to consider the following question.

\vskip 0.1in
\noindent \textbf{Question:} Fix $\al_1, \al_2>1$ satisfying \eqref{eq1-11}. Is there
a radially symmetric non-topological solution of system \eqref{problem} subject to the prescribed asymptotic condition \eqref{eq1-10}?
\vskip 0.1in

If we let $N_1=N_2=N$, $a_1=a_2$ and $u_1=u_2=u$ in \eqref{problem}, then system \eqref{problem} turns to be
the following Chern-Simons-Higgs scalar equation
\be\label{eq1-13}\Delta u+ e^u(1-e^u)=4\pi N \dd_0\quad\text{in $\R^2$}.\ee
Equation \eqref{eq1-13} is known as the $SU(2)$ Chern-Simons model for the Abelian case; see \cite{HKP, JW}. For recent developments, we refer the reader to \cite{CI, CFL, C1, C2, CKL1, SY1, SY2} and references therein.
Remark that the Pohozaev identity plays a key role in studying non-topological solutions of \eqref{eq1-13}. Let $u$ be a radial non-topological solution of \eqref{eq1-13} satisfying $u(x)=-2\al \ln|x|+O(1)$ near $\iy$. Then the Pohozaev identity implies
$$(\al-1)^2-(N+1)^2=\frac12\int_0^\iy r e^{2u}dr>0,$$
which implies $\al>N+2$. In 2002,
Chan, Fu and the second author \cite{CFL} proved that the inequality $\al>N+2$ is also a sufficient condition for the existence of radial non-topological solutions
satisfying $u(x)=-2\al \ln|x|+O(1)$ near $\iy$.
However, as pointed out in \cite{CL, CKL2}, this might not hold for system \eqref{problem}. The reason is following: there might be a sequence of solutions $(u_{1, n}, u_{2, n})$ such that only one component blows up, but the other one does not, i.e. the so-called phenomena of {\it partial blowup}; see Theorem C for instance. As a result, only one of the $L^1$ norms of $e^{2u_{1, n}}$ and $e^{2u_{2, n}}$ tends to $0$ as $n\to \iy$, which implies that the quantity $J(\al_1-1, \al_2-1)-J(N_1+1, N_2+1)$ might not converge to $0$, namely it has a gap. Therefore, the inequality \eqref{eq1-11} might {\it not} be a sufficient condition for the existence of radial non-topological solutions satisfying \eqref{eq1-10}.

In a previous work \cite{CL}, we found a sufficient condition for the above question. As in \cite{CL}, we define
\begin{equation}\label{eq1-16}
\Om:=\left\{(\al_1, \al_2) \;|\; \al_1, \al_2>1, \; J(\al_1-1, \al_2-1)>J(N_1+1, N_2+1)\right\},
\end{equation}
and
\be\label{eq1-17}S:=\left\{(\al_1, \al_2)\;|\; \al_1, \al_2>0\;\;\text{and $(\al_1, \al_2)$ satisfies \eqref{eq1-18}}-\eqref{eq1-21}\right\},\ee
where
{\allowdisplaybreaks
\begin{align}
\label{eq1-18}&(A-2B)\al_2-a_2(1+a_2)\al_1
<a_2(1+a_2)N_1+AN_2+2(A-B),\\
\label{eq1-19}&(A-2B)\al_1-a_1(1+a_1)\al_2
<a_1(1+a_1)N_2+AN_1+2(A-B),\\
\label{eq1-20}&(3A-4B)\al_1
+\tfrac{1+a_1}{a_2}(A-2B)\al_2\nonumber\\
&\quad\qquad>AN_1+\tfrac{1+a_1}{a_2}AN_2
+\left(4+2\tfrac{1+a_1}{a_2}\right)(A-B),\\
\label{eq1-21}&(3A-4B)\al_2
+\tfrac{1+a_2}{a_1}(A-2B)\al_1\nonumber\\
&\quad\qquad>AN_2+\tfrac{1+a_2}{a_1}AN_1
+\left(4+2\tfrac{1+a_2}{a_1}\right)(A-B).
\end{align}
}%

\medskip
\noindent{\bf Theorem B.} \cite{CL} {\it
Assume that $N_1, N_2$ are non-negative integers and $a_1, a_2>0$ satisfy
\be\label{eq1-22}(1+a_1)(1+a_2)>\left(6-2\sqrt{5}\right)a_1a_2.\ee
Let $\Om$ and $S$ be defined in \eqref{eq1-16}-\eqref{eq1-17}. Then $S\cap \Om\neq \emptyset$, and for any fixed $(\al_1, \al_2)\in S\cap\Om$, system \eqref{problem} admits a radially symmetric non-topological solution $(u_1, u_2)$ satisfying the prescribed asymptotic condition \eqref{eq1-10}.}
\medskip

Remark that $S\cap\Om\neq\emptyset$ if and only if $(a_1, a_2)$ satisfies \eqref{eq1-22}, i.e. \eqref{eq1-22} is a necessary condition for Theorem B. For example, Theorem B can be applied to the $SU(3)$ system \eqref{eq1-7} and the $\mathcal{B}_2$ system \eqref{eq1-24}. The counterpart of Theorem B for the $SU(3)$ system \eqref{eq1-7} was firstly obtained by Choe, Kim and the second author \cite{CKL2}, and Theorem B is a generalization of their result to the generic system \eqref{problem}. Applying Theorem B to the $\mathcal{B}_2$ case, we conclude that if
\be\label{eq1-1-1}
(\alpha _{1},\alpha _{2})\in S\cap \Omega =\left \{ (\alpha _{1},\alpha
_{2})\left \vert
\begin{array}{l}
\alpha _{1}>N_{1}+N_{2}+3 \\
\alpha _{2}>2N_{1}+N_{2}+4%
\end{array}%
\right. \right \},\ee
then \eqref{eq1-24} has a radial non-topological solution satisfying \eqref{eq1-10}.

We proved Theorem B via the Leray-Schauder degree theory. To do this, we proved a uniform boundedness result for radial solutions satisfying \eqref{eq1-10} whenever $(\al_1, \al_2)\in S\cap\Om$.
Then a natural question is {\it whether the set $S\cap \Om$ is the optimal range of $(\al_1, \al_2)$ for the existence of radial solutions satisfying \eqref{eq1-10}}. This question has not been settled yet (Theorem \ref{th1-1} gives {\it a negative answer} for the $\mathcal{B}_2$ case; see Remark \ref{remark1-2} below). However, in the same paper \cite{CL}, we also proved the existence of partially bubbling solutions along some part of $\partial(S\cap \Om)$.

\medskip

\noindent{\bf Theorem C.} \cite{CL} {\it Assume that $N_1, N_2$ are non-negative integers and $a_1, a_2>0$ satisfy
\be\label{eq1-26-1}(1+a_1)(1+a_2)> 2a_1a_2.\ee
Let $(\al_1, \al_2)\in \Om$ satisfy $\al_1\neq \al_2$ and
\begin{equation}\label{eq1-26}
(A-2B)\al_2-a_2(1+a_2)\al_1
=a_2(1+a_2)N_1+AN_2+2(A-B).
\end{equation}
Then system \eqref{problem} admits a sequence of radial non-topological bubbling solutions $(u_{1, n}, u_{2, n})$ such that $\sup_{\R^2}u_{2, n}\to-\iy$ as $n\to\iy$. Furthermore, there exists a intersection point $R_{1,n}\gg 1$ of $u_{1,n}$ and $u_{2,n}$ such that:
\begin{itemize}
\item[$(i)$] $u_{1, n}\to U$ in $C^2_{loc}(B(0, R_{1,n}))$, where $U$ is the unique radial solution of (\ref{eq1-5-8}) with $\ga=\al_1+\frac{2a_1}{1+a_2}(\al_2-1)$. Besides,
\be\label{eq1-5-9} \lim_{n\to\iy}\int_{R_{1,n}}^{\iy}re^{u_{1,n}}dr=0.
\ee
\item[$(ii)$] there exists $(\al_{1, n}, \al_{2, n})\in\Om$ such that
$$u_{k,n}(r)=-2\al_{k,n}\ln r+O(1)\;\;\text{as}\;\,r\to \iy,\;\;k=1,2,$$
and $(\al_{1, n}, \al_{2, n})\to (\al_1, \al_2)$ as $n\to\iy$.
\end{itemize}
}
\medskip

Theorem C proves the existence of bubbling solutions along the boundary of \eqref{eq1-18}.
For these bubbling solutions, only the second component blows up. We call this type of bubbling solutions of {\it type I}. Inspired by Theorem B, there might exist another type of bubbling solutions along the boundary of \eqref{eq1-20} (or equivalently \eqref{eq1-21}), which we call of {\it type II}. But for type II, the estimate \eqref{eq1-5-9} no longer holds, which means that {\it both components of bubbling solutions blow up at infinity}, namely the asymptotics of type II are more complicated.

Therefore, Theorem \ref{th1-1} gives precisely the existence of bubbling solutions of type II. We conclude this section by some further comments about Theorem \ref{th1-1}.

\br
For the $SU(3)$ case, we have $(a_1, a_2)=(1,1)$. Then it is easy to check that the range of $(\al_1, \al_2)$ given in Theorem \ref{th1-1} is exactly
\be\label{eq1-2-1}2\al_1+\al_2=N_1+2N_2+6\quad\text{and}\quad 1\le\al_1<N_2+2.\ee
We remark that the counterpart of Theorem \ref{th1-1} for the $SU(3)$ system \eqref{eq1-7} was firstly proved by Choe, Kim and the second author \cite{CKL3} under  the following assumption
\be\label{eq11-2-1}2\al_1+\al_2=N_1+2N_2+6\quad\text{and}\quad 1<\al_1<N_2+2,\ee
where the assumption $\al_1>1$ plays a crucial role in their proof.
Theorem \ref{th1-1} improves their result on two aspects. First, for the $SU(3)$ case, Theorem \ref{th1-1} covers the special case $\al_1=1$ (note that $(\al_1,\al_2)=(1, N_1+2N_2+4)\in \partial (S\cap \Om)$) which was not considered in \cite{CKL3}. Remark that the case $\al_1=1$ is different from the case $\al_1>1$. Indeed, we can see from \eqref{eq1-41} that, if $\al_1>1$, then \eqref{eq1-5-9} no longer holds, namely the bubbling solutions are of type II. However, the case $\al_1=1$ satisfies \eqref{eq1-5-9}, namely the bubbling solutions are of type I just as in Theorem C. This phenomena is reasonable, because the intersection point of line \eqref{eq1-26} with line \eqref{eq1-20-2}, which exists provided $A>2B$, is exactly
$$(\al_1, \al_2)=\left(1,\; \tfrac{a_2(1+a_2)}{A-2B}(N_1+1)+\tfrac{A}{A-2B}(N_2+1)+1\right).$$
Thus the case $\al_1=1$ can be seen as a critical case that connects bubbling solutions of type I with bubbling solutions of type II.
Observe that $\al_1>1$ was assumed in Theorem C, so our study of the case $\al_1=1$ is also a complement of Theorem C. Second, Theorem \ref{th1-1} generalizes their result to the generic system \eqref{problem}.
Theorem \ref{th1-1} indicates that, there exist bubbling solutions of type II along the boundary of \eqref{eq1-20}. This fact, together with Theorem C, shows that the set $S\cap\Om$ is an optimal range\footnote{Here we mean that the a priori estimates can not hold for any open connected set containing $S\cap\Om$ as a proper subset. In other words, if there is another connected range $\tilde{S}$ for the existence of solutions by the degree theory, then $\tilde{S}\cap \overline{S\cap\Om}=\emptyset$.} in view of the degree theory.
\er

\br\label{remark1-2}
For the $SU(3)$ system \eqref{eq1-7}, we still do not know whether the set $S\cap \Om$ is the optimal range for the existence of non-topological solutions, but we strongly believe so in view of Theorems B, C and \ref{th1-1}; see \cite{CKL2, CKL3}. However, the generic system \eqref{problem} is more involved than the $SU(3)$ system \eqref{eq1-7}. One example is the $\mathcal{G}_2$ case where $(a_1, a_2)=(5, 9)$ and so $3A-4B=0$. Therefore, none of Theorems B, C and \ref{th1-1} can be applied to the $\mathcal{G}_2$ case, and understanding the non-topological solution structure for the $\mathcal{G}_2$ case remains open.  Another example is the $\mathcal{B}_2$ case, where $(a_1, a_2)=(2, 3)$ and so $A-2B=0$. Then it is easy to check that the range of $(\al_1, \al_2)$ given in Theorem \ref{th1-1} is exactly
\be\label{eq1-2-2}\al_1=N_1+N_2+3\quad\text{and}\quad \al_2>1.\ee
From here, we conclude that the set $S\cap\Om$ (see \eqref{eq1-1-1}) given in Theorem B, which is optimal in view of the degree theory, is not the optimal range for the existence of non-topological solutions to the $\mathcal{B}_2$ system \eqref{eq1-24}.
\er

\br
Theorem B can not be applied to the case $A\le (6-2\sqrt{5})B$. Therefore, Theorem \ref{th1-1} also gives the first existence result of radial non-topological solutions for the case $\frac{4}{3}B<A\le (6-2\sqrt{5})B$.
\er

\br
Clearly, by Theorem A-$(iii)$, assumptions $\al_1\ge 1, \al_2>1$ are necessary for Theorem \ref{th1-1}.
As mentioned before, $\ga>N_1+2$ is a necessary and sufficient condition for the existence of radial solutions for \eqref{eq1-5-8}. Therefore, \eqref{eq1-5-10} indicates that \eqref{eq1-20-3} is a necessary condition for Theorem \ref{th1-1}. In fact,  \eqref{eq1-20-3} is also needed to guarantee that $(\al_1, \al_2)$ satisfies inequality \eqref{eq1-11} (see Lemma \ref{lemma2-2} below), which is obviously necessary by the Pohozaev identity. On the other hand,
assumptions \eqref{eq2-1-1}-\eqref{eq2-2-1} are also necessary conditions for Theorem \ref{th1-1}, because they are the necessary and sufficient condition to guarantee $\{(\al_1, \al_2)\,|\, \al_1\ge 1, \al_2>1\;\text{and satisfy}\;\eqref{eq1-20-2}-\eqref{eq1-20-3}\}\neq \emptyset$; see Lemma \ref{lemma2-1} below.
\er

Theorem \ref{th1-1} will be proved via the shooting method in Section 2.

\vskip0.10in
\section{Construction via the shooting method}
\renewcommand{\theequation}{2.\arabic{equation}}

In this section, we will prove Theorem \ref{th1-1} by constructing bubbling solutions via the shooting method. In the sequel, we assume that $a_1, a_2>0$ satisfy
\be\label{eq2-1}3(1+a_1)(1+a_2)-4a_1a_2>0.\ee
Recall the notations $A, B$ in \eqref{eq1-1-1-1}. Assume that $N_1, N_2$ are non-negative integers satisfying
\be\label{eq2-2}(A-4B)(N_1+1)<2a_1(1+a_1)(N_2+1)\;\;\text{if}\;\,A-4B>0.\ee
Define
\be\label{eq2-0-1}
\Sigma:=\{(\al_1, \al_2)\,|\, \al_1\ge 1, \;\al_2>1,\; g(\al_1, \al_2)=0,\; h(\al_1, \al_2)>0\},
\ee
where
{\allowdisplaybreaks
\begin{align}
\label{eq2-3}g(\al_1, \al_2):=&(3A-4B)\al_1
+\frac{1+a_1}{a_2}(A-2B)\al_2-AN_1\nonumber\\
&-\frac{1+a_1}{a_2}AN_2
-\left(4+2\frac{1+a_1}{a_2}\right)(A-B),\\
\label{eq2-3-1}h(\al_1, \al_2):=&\frac{4B-A}{A}(\al_1-1)+\frac{2a_1}{1+a_2}(\al_2-1)-(N_1+1).
\end{align}
}%

\bl\label{lemma2-1} $\Sigma\neq\emptyset$ if and only if \eqref{eq2-1}-\eqref{eq2-2} hold.\el

\begin{proof}
Denote $\tilde{N}_k=N_k+1$ for $k=1, 2$. Clearly $\Sigma\neq\emptyset$ is equivalent to
$$\tilde{\Sigma}:=\left\{(x, y)\,|\, x\ge 0,\, y>0,\, \tilde{g}(x, y)=0,\, \tilde{h}(x, y)>0\right\}\neq\emptyset,$$
where
{\allowdisplaybreaks
\begin{align*}
\tilde{g}(x, y):=&(3A-4B)x
+\frac{1+a_1}{a_2}(A-2B)y-A\tilde{N}_1-\frac{1+a_1}{a_2}A\tilde{N}_2,\nonumber\\
\tilde{h}(x, y):=&\frac{4B-A}{A}x+\frac{2a_1}{1+a_2}y-\tilde{N}_1.
\end{align*}
}%
If $3A-4B\le 0$, then $\{(x, y) \,|\, x\ge 0, y>0,\, \tilde{g}(x, y)=0\}=\emptyset$. Therefore, \eqref{eq2-1} is a necessary condition to guarantee $\tilde{\Sigma}\neq \emptyset$.
In the following, we always assume that \eqref{eq2-1} holds.
Then it is trivial to see that $\tilde{\Sigma}\neq\emptyset$ in the case $A-2B\le 0$. Consider the remaining case $A-2B>0$. Observe that the intersection point of $\tilde{g}(x, y)=0$ with the $y$-axis is $(0, \frac{a_2(1+a_2)\tilde{N}_1+A\tilde{N}_2}{A-2B})$.
If $A-4B\le 0$, a direct computation shows that
\be\label{eq2-0}\tilde{h}\left(0,\; \tfrac{a_2(1+a_2)\tilde{N}_1+A\tilde{N}_2}{A-2B}\right)>0\ee
holds automatically, which implies $\tilde{\Sigma}\neq 0$. If $A-4B>0$, it is easy to see that $\tilde{\Sigma}\neq\emptyset$ if and only if \eqref{eq2-0} holds, which is just equivalent to \eqref{eq2-2}. This completes the proof.
\end{proof}

In the sequel, we
fix any $(\al_1, \al_2)\in \Sigma$. We will prove the existence of bubbling solutions near $(\al_1, \al_2)$ just as stated in Theorem \ref{th1-1}.

Inspired by the blowup analysis in our previous work \cite{CL}, we define
\be\label{eq2-4}\ga:=\frac{4B-A}{A}(\al_1-1)+\frac{2a_1}{1+a_2}(\al_2-1)+1.\ee
Then $h(\al_1, \al_2)>0$ gives
\be\label{eq2-4-1}\ga>N_1+2.\ee
Clearly, $g(\al_1, \al_2)=0$ and \eqref{eq2-4} yield
{\allowdisplaybreaks
\begin{align}
\label{eq2-5}
&\al_1=-\frac{A-2B}{A}(\ga-1)+\frac{2B}{A}(N_1+1)+\frac{2a_1}{1+a_2}(N_2+1)+1,\\
\label{eq2-6}&\al_2=\frac{a_2}{1+a_1}\frac{3A-4B}{A}(\ga-1)
+\frac{a_2}{1+a_1}\frac{A-4B}{A}(N_1+1)\nonumber\\
&\qquad+\frac{A-4B}{A}(N_2+1)+1.
\end{align}
}%
By $\al_1\ge 1$ we obtain
\be\label{eq2-5-1}\ga\le 1+\frac{2B}{A-2B}(N_1+1)+\frac{2a_1(1+a_1)}{A-2B}(N_2+1)
\;\;\text{if}\;\,A>2B.\ee
As in Lemma \ref{lemma2-1}, for convenience, we always denote
\be\label{eq2-7}\tilde{\ga}=\ga-1,\;\;\tilde{\al}_k=\al_k-1
\;\;\text{and}\;\;\tilde{N}_k=N_k+1,\;\;k=1,2.\ee

\bl\label{lemma2-2} $h(\al_1, \al_2)>0$ implies $J(\al_1-1, \al_2-1)>J(N_1+1, N_2+1)$.\el

\begin{proof}
By the definition \eqref{eq1-9} of $J$, a direct computation shows
{\allowdisplaybreaks
\begin{align}\label{eq2-8}
J(x, y)&=J(-x, -y)\nonumber\\
&=J\left(x,\, -\frac{2a_2}{1+a_1}x-y\right)=J\left(-x,\, \frac{2a_2}{1+a_1}x+y\right)\nonumber\\
&=J\left(-x-\frac{2a_1}{1+a_2}y,\,y\right)=J\left(x+\frac{2a_1}{1+a_2}y,\,-y\right).
\end{align}
}%
Since \eqref{eq2-5}-\eqref{eq2-6} give
$$\tilde{\al}_1=\frac{2B-A}{A}\tilde{\ga}+
\frac{2B}{A}\tilde{N}_1+\frac{2a_1}{1+a_2}\tilde{N}_2,$$
$$\tilde{\al}_2=\frac{a_2}{1+a_1}(\tilde{\ga}+\tilde{N}_1)
+\tilde{N}_2-\frac{2a_2}{1+a_1}\tilde{\al}_1,$$
we can derive
{\allowdisplaybreaks
\begin{align*}
J(\tilde{\al}_1, \tilde{\al}_2)&=J\left(\tilde{\al}_1, \; \frac{a_2}{1+a_1}(\tilde{\ga}+\tilde{N}_1)
+\tilde{N}_2-\frac{2a_2}{1+a_1}\tilde{\al}_1\right)\\
=&J\left(\tilde{\al}_1, -\frac{a_2}{1+a_1}(\tilde{\ga}+\tilde{N}_1)
-\tilde{N}_2\right)\\
=&J\left(\frac{2a_1}{1+a_2}\left(\frac{a_2}{1+a_1}(\tilde{\ga}+\tilde{N}_1)
+\tilde{N}_2\right)-\tilde{\ga}, \; -\frac{a_2}{1+a_1}(\tilde{\ga}+\tilde{N}_1)
-\tilde{N}_2\right)\\
=&J\left(-\tilde{\ga},\;\frac{a_2}{1+a_1}(\tilde{\ga}+\tilde{N}_1)
+\tilde{N}_2\right)\\
=&J(\tilde{N}_1, \tilde{N_2})+\frac{a_2(1+a_1+a_2)}{2(1+a_1)}\left(\tilde{\ga}^2-\tilde{N}_1^2\right).
\end{align*}
}%
By $\ga>N_1+2$, we conclude $J(\al_1-1, \al_2-1)>J(N_1+1, N_2+1)$.
\end{proof}

Since $\gamma>N_1+2$, by \cite[Theorem 2.1]{CFL}, there is a unique radially symmetric solution $U$ of the Chern-Simons-Higgs equation
\be\label{eq5-8}\begin{cases}\Delta U+(1+a_1)e^U-(1+a_1)^2 e^{2U}=4\pi N_1\dd_0\;\;\text{in $\R^2$},\\
U(x)=-2\ga \ln|x|+O(1)\;\;\text{as $|x|\to \iy$}.\end{cases}\ee
Moreover, $U<-\ln (1+a_1)$ in $\R^2$ and
{\allowdisplaybreaks
\begin{align}&\int_{0}^\iy r\left[(1+a_1)e^U-(1+a_1)^2 e^{2U}\right]dr=2(\ga+N_1),\nonumber\\
\label{eq5-9} &\lim_{r\to\iy} \left[r^2e^{U(r)}+|rU'(r)+2\ga|\right]=0.
\end{align}
}%
Let $V(|x|)=V(x):=U(x)-2N_1\ln |x|$, then $V(0):=\lim_{r\to 0}V(r)$ is well defined; see \cite{CFL}.

To use the shooting method, we consider an initial value problem of system \eqref{problem} in a radial variable. Denote
$$F_k(r):=(1+a_k)e^{2u_k(r)}-e^{u_k(r)}-a_k e^{u_1(r)+u_2(r)},\;\;k=1, 2,$$
for convenience.
Clearly
\be\label{eq5-10-1}|F_k|\le (1+a_k)e^{u_k}\le 1+a_k\;\;\text{whenever}\;\; u_1, u_2\le 0,\ee
and it is easy to check that
\be\label{eq5-10-2}
\begin{cases}
\begin{split}
&F_k<-\tfrac{1}{2}e^{u_k}<0\;\,\text{if}\;\,u_k<-\ln2(1+a_1+a_2),\\
&F_{3-k}<F_k<0\;\,\text{if}\;\,u_k<u_{3-k}<-\ln2(1+a_1+a_2),
\end{split}\;\, k=1, 2.
\end{cases}\ee
We study the following initial value problem
\begin{equation}\label{eq5-10}
\begin{cases}
u_1''(r)+\frac{1}{r}u_1'(r)=(1+a_1)F_1(r)-a_1 F_2(r),\;\;r>0,\\
u_2''(r)+\frac{1}{r}u_2'(r)=(1+a_2)F_2(r)-a_2 F_1(r),\;\; r>0,\\
u_1(r)=2N_1\ln r+V(0)+o(1),\;\; r\to 0,\\
u_2(r)=2N_2\ln r+\ln \e+o(1),\;\; r\to 0,
\end{cases}
\end{equation}
where $\e\in (0, 1)$. Clearly, the solution of \eqref{eq5-10} depends on $\e$ and we denote it by $(u_{1, \e}, u_{2, \e})$. Consequently, $F_k(r)=F_{k,\e}(r)$ also depends on $\e$. For the sake of convenience, when no confusion arise, we will omit the subscript $\e$.
The main result of this section is following, and Theorem \ref{th1-1} is a direct corollary.

\begin{theorem}\label{th2-1}
Assume $\eqref{eq2-1}-\eqref{eq2-2}$ and fix any $(\al_1, \al_2)\in \Sigma$. Then there exists sufficiently small $\e_0>0$ such that for any $\e<\e_0$, system \eqref{eq5-10} has an entire solution $(u_{1, \e}, u_{2, \e})$. Furthermore, there exist two intersection points $R_{3,\e}\gg R_{1,\e}\gg 1$ of $u_{1,\e}$ and $u_{2,\e}$ such that:
\begin{itemize}
\item[$(i)$] $u_{1, \e}\to U$ in $C^2_{loc}(B(0, R_{1,\e}))$ and $\sup\limits_{\R^2} u_{2, \e}\to-\iy$ as $\e\to 0$;
\item[$(ii)$] $\int_{R_{1,\e}}^{R_{3,\e}}re^{u_{1,\e}}dr\to 0$, $\int_0^{R_{1,\e}}re^{u_{2,\e}}dr\to 0$, $\int_{R_{3,\e}}^{\iy}re^{u_{2,\e}}dr\to 0$ and
{\allowdisplaybreaks
\begin{align*}& \int_{R_{1,\e}}^{R_{3,\e}}re^{u_{2,\e}}dr\to \frac{2}{1+a_2}\left(\frac{2a_2}{1+a_1}(\tilde{\ga}+\tilde{N}_1)+2\tilde{N}_2\right),\\
&\int_{R_{3,\e}}^{\iy}re^{u_{1,\e}}dr\to \frac{4}{1+a_1}\left(\al_1-1\right)
\end{align*}
}%
as $\e\to 0$;
\item[$(iii)$] there exists $(\al_{1,\e}, \al_{2,\e})\in \Om$ such that
$$u_{k,\e}(r)=-2\al_{k,\e}\ln r+O(1)\;\;\text{as}\;\,r\to \iy,\;\;k=1,2,$$
and $(\al_{1, \e}, \al_{1, \e})\to (\al_1, \al_2)$ as $\e\to 0$.
\end{itemize}

\end{theorem}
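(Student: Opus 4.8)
The plan is to run a three--scale radial shooting argument for the initial value problem \eqref{eq5-10}, in which the two components take turns driving the dynamics. Since the right--hand sides of the first two equations of \eqref{eq5-10} are bounded whenever $u_1,u_2\le0$ (by \eqref{eq5-10-1}), for each $\e\in(0,1)$ the problem has a unique local solution $(u_{1,\e},u_{2,\e})$; it will follow from the regional estimates below that $u_{1,\e},u_{2,\e}<0$ with locally bounded derivatives, so a standard continuation argument makes the solution entire. The main bookkeeping device is the flux $ru_{k,\e}'(r)$, which obeys $(ru_{k,\e}')'=r[(1+a_k)F_k-a_kF_{3-k}]$, so the variation of $ru_{k,\e}'$ across an annulus records, up to the small coupling term, the weighted mass $\int(1+a_k)re^{u_{k,\e}}\,dr$ deposited there. \emph{Scale one.} On any fixed ball, $u_{2,\e}\le\ln\e+O(1)\to-\iy$, hence $e^{u_{2,\e}}\to0$, the first equation of \eqref{eq5-10} converges to \eqref{eq5-8}, and continuous dependence gives $u_{1,\e}\to U$ in $C^2_{loc}(\R^2)$, the first part of $(i)$. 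Plugging $u_{1,\e}\approx U$ into the second equation and using \eqref{eq5-9} yields $ru_{2,\e}'(r)\to2\mu$ as $r\to\iy$ with $\mu:=N_2+\frac{a_2}{1+a_1}(\ga+N_1)>0$, so $u_{2,\e}(r)=2\mu\ln r+\ln\e+O(1)$ with $r^2e^{u_{2,\e}}$ still $o(1)$ as long as $1\ll r\le R_{1,\e}$. Comparing with $u_{1,\e}(r)=-2\ga\ln r+O(1)$, where $\ga>N_1+2$ by \eqref{eq2-4-1}, produces a first intersection point $R_{1,\e}$ of $u_{1,\e}$ and $u_{2,\e}$ with $\ln R_{1,\e}=\frac{|\ln\e|}{2(\ga+\mu)}(1+o(1))\to\iy$, and one checks the convergence in $(i)$ persists on $B(0,R_{1,\e})$.

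\emph{Scale two: the bubble of $u_{2,\e}$.} Just past $R_{1,\e}$ one has $ru_{2,\e}'\approx2\mu>0$ and $u_{1,\e}<u_{2,\e}$, so $u_{2,\e}$ keeps rising until $e^{u_{2,\e}}$ turns critical; because $\sup u_{2,\e}\to-\iy$ and $u_{1,\e}<u_{2,\e}$ in this range, $F_2=-e^{u_{2,\e}}(1+o(1))$ while every other term on the right of the second equation (the quadratic pieces and the coupling $a_2F_1$) is of strictly lower order. Rescaling $r\mapsto r/R_{2,\e}$ around the bubble center $R_{2,\e}\sim\e^{-1/(2+2\mu)}$ and subtracting the height, the second equation becomes a small perturbation of the Liouville equation $\Delta w+(1+a_2)e^w=0$ with inner exponent $\mu$; by the classification of entire radial Liouville solutions and an ODE perturbation estimate, $u_{2,\e}(r)=-2(\mu+2)\ln r+O(1)$ on the outer side and $\int_{R_{1,\e}}^{R_{3,\e}}re^{u_{2,\e}}\,dr\to\frac{4(\mu+1)}{1+a_2}$, the constant in $(ii)$. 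Simultaneously the first equation reads $(ru_{1,\e}')'=r[a_1e^{u_{2,\e}}-(1+a_1)e^{u_{1,\e}}](1+o(1))$, whence $\int_{R_{1,\e}}^{R_{3,\e}}re^{u_{1,\e}}\,dr\to0$ and the flux $ru_{1,\e}'$ rises from $\approx-2\ga$ to $\approx-2\ga+\frac{4a_1(\mu+1)}{1+a_2}=2(\al_1-2)$, the last equality a short computation from \eqref{eq2-4}--\eqref{eq2-6}. As $u_{1,\e}$ thereby overtakes the now decaying $u_{2,\e}$, a second intersection point $R_{3,\e}\gg R_{1,\e}$ appears.

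\emph{Scale three and conclusion.} For $r>R_{3,\e}$ the term $e^{u_{1,\e}}$ is the critical one while $e^{u_{2,\e}}$ has exhausted its mass, so a last rescaling---valid since $\al_1\ge1$ makes the inner exponent $\al_1-2\ge-1$ admissible---reduces the first equation to a perturbation of $\Delta w+(1+a_1)e^w=0$ and gives $\int_{R_{3,\e}}^{\iy}re^{u_{1,\e}}\,dr\to\frac{4}{1+a_1}(\al_1-1)$ and $\int_{R_{3,\e}}^{\iy}re^{u_{2,\e}}\,dr\to0$, completing $(ii)$. Patching the three flux computations yields $ru_{1,\e}'\to-2\al_1$ and $ru_{2,\e}'\to-2\al_2$, hence $u_{k,\e}(r)=-2\al_{k,\e}\ln r+O(1)$ with $\al_{k,\e}\to\al_k$; since $\sup u_{2,\e}\to-\iy$ the solution is non--topological, and Lemma \ref{lemma2-2} combined with $\al_{k,\e}\to\al_k$ gives $(\al_{1,\e},\al_{2,\e})\in\Om$ for small $\e$, which is $(iii)$. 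This proves Theorem \ref{th2-1}, and Theorem \ref{th1-1} is then immediate by letting $\e=\e_n\to0$.

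\emph{Main obstacle.} The heart of the argument is the quantitative control of the two transition annuli where a component switches between ``dormant'' and ``Liouville--bubbling'': one must calibrate the rescalings so that the coupling terms $a_kF_{3-k}$ and the quadratic pieces $e^{2u_k}$, $e^{u_1+u_2}$ are negligible with explicit rates, establish genuine $C^2_{loc}$ convergence of each rescaled component to the relevant Liouville profile, and verify that no weighted mass leaks between the three scales so the flux identities close exactly. The hypotheses \eqref{eq2-1}--\eqref{eq2-2} and $\ga>N_1+2$ enter precisely to keep $\Sigma\neq\emptyset$ (Lemma \ref{lemma2-1}), to make \eqref{eq5-8} solvable, and to keep the intermediate exponents $\mu$ and $\al_1-2$ in the admissible range. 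The most delicate point is the borderline case $\al_1=1$, where the third bubble degenerates (inner exponent $-1$, zero mass): then \eqref{eq1-5-9} is in fact recovered and the solution is of type I rather than type II, so confirming that the construction still runs---in particular that $\al_{1,\e}>1$---requires a sharper two--term expansion at the third scale.
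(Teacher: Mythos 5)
Your plan---three radial scales, Liouville rescalings on the two transition annuli, and flux accounting to close the identities---is indeed the architecture of the paper's proof, and the limiting constants you compute (inner exponent $\mu$ for the first bubble and $\al_1-2$ for the second, the masses $\tfrac{4(\mu+1)}{1+a_2}$ and $\tfrac{4(\al_1-1)}{1+a_1}$, the intermediate flux value $2(\al_1-2)$) all agree with the paper. However the proposal omits the two mechanisms that actually carry the argument. First, you never invoke the radial Pohozaev identity \eqref{pohozaev}, which is the quadratic conserved quantity the paper uses repeatedly (Lemmas \ref{lemma2-4}, \ref{lemma2-5}, \ref{lemma2-7}, \ref{lemma2-11}, \ref{lemma2-12}) both to prove the a priori upper bound $u_{k,\e}(r)+2\ln r\le C$ on the bubbling annuli and to determine the bubble heights $R_{2,\e}^{2}e^{u_{2,\e}(R_{2,\e})}\to\tfrac{D^2-4}{2(1+a_2)}$ and $R_{4,\e}^{2}e^{u_{1,\e}(R_{4,\e})}\to\tfrac{E^2}{2(1+a_1)}$. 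Without these, the linear flux balance you describe does not suffice to pin down the rescaled Liouville profile (the Prajapat--Tarantello family is two-parameter, while flux balance gives only one constraint at each end) nor to control the quadratic terms $e^{2u_k}$, $e^{u_1+u_2}$; this is precisely the ``main obstacle'' you flag, and the Pohozaev identity is the missing key. Second, you never show that the shooting solution stays negative for all $r$, i.e.\ that $R_\e^{*}=+\iy$; ``flux patching'' alone does not preclude $u_{1,\e}$ or $u_{2,\e}$ from reaching $0$ at some finite radius. The paper devotes Lemma \ref{lemma2-8} and the final step of the $\al_1>1$ proof to establishing $ru_{1,\e}'(r)<-2-\tilde{\al}_1$ and $ru_{2,\e}'(r)<-2-\tilde{\al}_2$ past the third scale, which forces $R_\e^{*}=+\iy$ by monotonicity.

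On the borderline case $\al_1=1$, your remedy (``a sharper two-term expansion at the third scale'') does not match what the paper does and would likely be harder to implement. Rather than expand further, the paper introduces an auxiliary small parameter $\vartheta\in(0,\dd)$ in the definition of $R_{4,\e}$ (see \eqref{eq6-1}), uses the Pohozaev identity in Lemma \ref{lemma3-1} to show that $R_{4,\e}=R_{5,\e}=R_\e^{*}=+\iy$ when $\vartheta>0$, deduces $\limsup_{\e\to 0}2\al_{1,\e}\le 2+\vartheta$, and then lets $\vartheta\to 0$. The identity $\lim_{\e\to 0}\int_{R_{3,\e}}^\iy re^{u_{1,\e}}dr=0$ is obtained there as a \emph{consequence} of $\al_{1,\e}\to 1$, reversing the logical order used when $\al_1>1$ (see Remark \ref{remark2-1}); this structural reversal is a feature of the paper's treatment that a two-term expansion would not reproduce.
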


In the rest of this section, we prove Theorem \ref{th2-1}, which is quite long and delicate, and we divide it into several lemmas. The basic strategy is similar to that in \cite{CKL3} where the $SU(3)$ case was studied. Since the solution $(u_{1, \e}, u_{2, \e})$ of the initial value problem \eqref{eq5-10} exists locally, the key point is to prove that $(u_{1, \e}, u_{2, \e})$ exists globally for $r\in (0,+\iy)$ (i.e., does not blow up at finite $r$) provided that $\e>0$ is sufficiently small. This is the most difficult part of the proof. To overcome this difficulty, we need to carry on a delicate analysis of the asymptotic behavior of $(u_{1, \e}, u_{2, \e})$ as $r\to+\iy$. For example, we need to understand what happens if $u_{1, \e}$ and $u_{2, \e}$ intersect and how many times they intersect. The main tool is the well-known Pohozaev identity together with the blow up analysis, which makes our argument rather technical and involved.

Moreover, as pointed out
in \cite{CL,HT, HL2}, the generic situation poses new analytical difficulties compared to the $SU(3)$ case. For example, in \cite{CKL3} they used many helpful inequalities, which hold in the $SU(3)$ case because of $(a_1, a_2)=(1,1)$ but can not hold for general $a_1, a_2>0$. This requires us to develop generic ideas to avoid using this kind of inequalities. It is interesting that our general idea turns out to be somewhat simpler. More importantly, our general idea also works for the critical case $\al_1=1$. This is reasonable in view of mathematics. Roughly speaking, people usually exploit a more special method when the problem is more special. When the problem is more general, people need to develop a more generic method, the idea of which might be more natural and hence simpler.

In the sequel, we denote positive constants independent of $\e$ (possibly different in different places) by $C, C_0, C_1,\cdots$.
Let $\dd$ be a constant such that
\be\label{eq3-1}0<\dd<\frac{\min\{1, a_1, a_2, B\}\cdot\min\{1, \hat{\al}_1, \tilde{\al}_2, \ga-(N_1+2)\}}{100(1+a_1+a_2)^4},\ee
where $\hat{\al}_1:=\tilde{\al}_1=\al_1-1$ if $\al_1>1$ and $\hat{\al}_1:=1$ if $\al_1=1$.
Then by \eqref{eq5-8}-\eqref{eq5-9}, we can fix a constant $R_0>1$ large enough such that
{\allowdisplaybreaks
\begin{align}
\label{eq3-2}&R_0^2 e^{U(R_0)}+\left|R_0 U'(R_0)+2\ga\right|<\dd^3,\;\;
\int_{R_0}^\iy re^{U(r)}dr<\dd^3,\nonumber\\
&\left|\int_{0}^{R_0} r\left[e^U-(1+a_1) e^{2U}\right]dr-\frac{2(\ga+N_1)}{1+a_1}\right|<\dd^3.
\end{align}
}%
Repeating the argument of \cite[Lemma 5.2]{CL}, we can prove the existence of small $\e_1>0$ such that for each $\e\in (0, \e_1)$, problem \eqref{eq5-10} admits a solution $(u_{1, \e}, u_{2, \e})$ on $[0, R_0]$ which satisfies:
\begin{itemize}
\item[$(1)$] Both $u_{1,\e}<0$ and $u_{2,\e}<0$ on $[0, R_0]$, $u_{2,\e}(R_0)<u_{1,\e}(R_0)<2\ln\dd$ and $R_0^2e^{u_{1, \e}(R_0)}<\dd$.
\item[$(2)$] $|R_0u_{1, \e}'(R_0)+2\ga|<\dd^2$ and $|R_0u_{2, \e}'(R_0)-\frac{2a_2}{1+a_1}(\ga+N_1)-2N_2|<\dd^2$.
\item[$(3)$] $|u_{1, \e}-U|\to 0$ and $u_{2, \e}\to-\iy$ uniformly on $[0, R_0]$ as $\e\to 0$.
\end{itemize}
Furthermore, by following the argument of \cite[Lemma 5.3]{CL}, for each $\e\in (0, \e_1)$, there exists $R_1=R_{1, \e}>R_0$ such that
\begin{itemize}
\item[$(4)$] $u_{1, \e}(R_{1, \e})=u_{2, \e}(R_{1, \e})$, $u_{2,\e}<u_{1,\e}<2\ln\dd$ on $[R_0, R_{1,\e})$ and
\begin{equation}\label{eq4-0}\begin{cases}\begin{split}&|ru_{1,\e}'(r)+2\ga|<\dd,\\ &\left|ru_{2,\e}'(r)-\tfrac{2a_2}{1+a_1}(\ga+N_1)-2N_2\right|<\dd,\\
\end{split}\quad\forall\,r\in [R_0, R_{1,\e}].
\end{cases}\end{equation}
\end{itemize}
In particular, together with \eqref{eq3-1}, we have
\be\label{eq3-23} r u_{1,\e}'(r)<-2\ga+\dd<-4\;\;\text{for any}\;\,r\in [R_0, R_{1,\e}].\ee

\bl\label{lemma2-3} Let $\e\to 0$, then $R_{1,\e}\to \iy$, $R_{1,\e}^2e^{u_k(R_{1,\e})}\to 0$ for $k=1, 2$ and $\int_0^{R_{1,\e}}re^{u_{2,\e}}dr\to 0$. Furthermore,
$$R_{1,\e}u_{1,\e}'(R_{1,\e})=-2\ga+o(1),$$ $$R_{1,\e}u_{2,\e}'(R_{1,\e})=\frac{2a_2}{1+a_1}(\ga+N_1)+2N_2+o(1).$$\el

\begin{proof}
By property $(3)$ above, $u_1(R_0)-u_{2}(R_0)\to \iy$ as $\e\to 0$. Since \eqref{eq4-0} gives $|ru_k'(r)|\le C$ on $[R_0, R_1]$, we easily obtain $R_1\to \iy$ as $\e\to 0$.
Then it follows from \eqref{eq3-23} and property $(1)$ that
$$R_1^2e^{u_2(R_1)}=R_1^2e^{u_1(R_1)}<\frac{1}{R_1^2}R_0^4e^{u_1(R_0)}\to 0\;\; \text{as $\e\to 0$}.$$
By \eqref{eq4-0} again, we know that $u_2'(r)>0$ for $r\in [R_0, R_1]$. Then by property $(3)$, we have
$$\int_0^{R_1} re^{u_2(r)}dr=o(1)+\int_{R_0}^{R_1} re^{u_2(r)}dr\le o(1)+R_1^2e^{u_2(R_1)}=o(1)$$
as $\e\to 0$. Moreover, $e^{u_2}\to 0$ uniformly on $[0, R_1]$, which implies
$$ \big|(1+a_1)e^{u_1(r)+u_2(r)}+F_2(r)\big|\le2(A-B)e^{u_2(r)}\to 0\;\;\text{uniformly on $[0, R_1]$}.$$
Recall that
$$u_1''(r)+\tfrac{1}{r}u_1'(r)=\left[(1+a_1)^2 e^{2u_1}-(1+a_1)e^{u_1}\right]-a_1
\left[(1+a_1)e^{u_1+u_2}+F_2\right].$$
By the standard continuous dependence on data in the ODE theory and $R_1\to \iy$ as $\e\to 0$, we conclude  that $|u_1-U|\to 0$ uniformly on any compact subset $K\subset\subset[0, \iy)$. This, together with $e^{u_1(r)}\le r^{-4}R_0^4 e^{u_1(R_0)}$ for $r\in [R_0, R_1]$, easily yields
\begin{align*}\lim_{\e\to 0}\int_{0}^{R_1}r\left[e^{u_1}-(1+a_1) e^{2u_1}\right]dr&=\int_0^\iy r\left[e^U-(1+a_1) e^{2U}\right]dr\\
&=\frac{2(\ga+N_1)}{1+a_1}.
\end{align*}
Moreover,
$$\lim_{\e\to 0}\int_{0}^{R_1}r(e^{2u_2}+e^{u_1+u_2})dr=0.$$
Consequently, by integrating \eqref{eq5-10} over $(0, R_1)$, we obtain
{\allowdisplaybreaks
\begin{align*}
R_1u_1'(R_1)&=2N_1-(1+a_1)\int_{0}^{R_1}r\left[e^{u_1}-(1+a_1) e^{2u_1}\right]dr+o(1)\\&=-2\ga+o(1),\\
R_1u_2'(R_1)&=2N_2+a_2\int_{0}^{R_1}r\left[e^{u_1}-(1+a_1) e^{2u_1}\right]dr+o(1)\\
&=\frac{2a_2}{1+a_1}(\ga+N_1)+2N_2+o(1)
\end{align*}
}%
as $\e\to 0$. This completes the proof.
\end{proof}

For each $\e\in (0, \e_1)$, we define
$$R^*=R^*_{\e}:=\sup\{r>R_{1,\e}\,|\;\text{$u_{1,\e}<0$, $u_{2,\e}<0$ on $[0, r)$}\}.$$
Our final goal is to prove $R_\e^*=\iy$ provided $\e>0$ sufficiently small. To this goal, we define
$$R_2=R_{2,\e}:=\sup\{r\in (R_{1,\e}, R_\e^*)\,|\, u_{2,\e}'>0\;\text{on}\; [R_{1,\e}, r)\}.$$

First, we recall the following Pohozaev identity (c.f. \cite[Lemma 7.2]{HL2} or \cite[Lemma 2.2]{CL})
{\allowdisplaybreaks
\begin{align}\label{pohozaev}
&\frac{d}{dr}\bigg\{J(ru_1'(r)+2, ru_2'(r)+2)+
(1+a_1+a_2)r^2\bigg[a_2e^{u_1}+a_1e^{u_2}\nonumber\\
&\qquad-\frac{a_2(1+a_1)}{2}e^{2u_1}
-\frac{a_1(1+a_2)}{2}e^{2u_2}+a_1a_2e^{u_1+u_2}\bigg]\bigg\}\\
=&(1+a_1+a_2)r\left[a_2(1+a_1)e^{2u_1}+a_1(1+a_2)e^{2u_2}-2a_1a_2 e^{u_1+u_2}\right].\nonumber
\end{align}
}%
where $J$ is defined in \eqref{eq1-9}.

\bl\label{lemma2-4}There exists a small $\e_2\in (0, \e_1)$ such that for each $\e\in (0, \e_2)$, $u_{1,\e}<u_{2,\e}$ on $(R_{1,\e}, R_{2, \e})$.\el

\begin{proof} Assume by contradiction that there exist a sequence $\e_n\downarrow 0$ and $r_n\in (R_{1,\e_n}, R_{2,\e_n})$ such that $u_{1,\e_n}(r_n)=u_{2,\e_n}(r_n)$ and $u_{1,\e_n}<u_{2,\e_n}$ on $(R_{1,\e_n}, r_n)$. Clearly $u_{1,\e_n}'(r_n)\ge u_{2, \e_n}'(r_n)>0$ for all $n$. We will omit the subscript $\e_n$ in the following argument for convenience. We divide the proof into seven steps.

{\it Step 1.} We claim that $u_2(r_n)\to -\iy$ as $\e_n\downarrow 0$.

If not, we may assume, up to a subsequence, that $u_2(r_n)\ge c_0$ for some constant $c_0< \ln\dd$. Recall $u_2(R_1)\to -\iy$. For $n$ large, there exist $b_n, d_n\in (R_1, r_n)$ such that $b_n<d_n$, $u_2(d_n)=c_0-1$, $u_2(b_n)=c_0-2$, $u_2\le c_0-1$ on $[R_1, d_n]$ and $u_2\ge c_0-2$ on $[b_n, d_n]$. Consequently, $u_1<u_2\le c_0-1<\ln\dd$ on $(R_1, d_n]$ and so \eqref{eq5-10-2} gives $F_2<F_1<0$ on $(R_1, d_n]$, which implies
\be\label{eq3-9}(ru_2'(r))'=r[(1+a_2)F_2-a_2F_1]<rF_2
<-\frac{1}{2}re^{u_2(r)}\;\;\text{on}\;\;(R_1, d_n].\ee
This, together with Lemma \ref{lemma2-3}, yields $0\le ru_2'(r)\le R_1u_2'(R_1)\le C$ uniformly on $[R_1, d_n]$.  Then by the mean value theorem, there exists $e_n\in (b_n, d_n)$ such that
$$\frac{1}{d_n-b_n}=\frac{u_2(d_n)-u_2(b_n)}{d_n-b_n}=u_2'(e_n)\le \frac{C}{R_1},$$
which implies $d_n-b_n\to \iy$. Consequently,
\begin{align*}
0&\le d_n u_2'(d_n)=b_n u_2'(b_n)+\int_{b_n}^{d_n}(ru_2')'dr
\le R_1u_2'(R_1)-\frac{1}{2}\int_{b_n}^{d_n}r e^{u_2(r)}dr\\
&\le C-\frac{1}{2}e^{c_0-2}\int_{b_n}^{d_n}rdr\to-\iy\;\;\text{as $n\to\iy$},
\end{align*}
a contradiction.

{\it Step 2.} We claim the existence of constant $C>0$ independent of $\e_n$ such that
\be\label{eq3-3}u_k(r)+2\ln r\le C\;\;\text{uniformly for $r\in [R_1, r_n]$},\;\; k=1,2.\ee

By Step 1, we may assume $u_1<u_2\le u_2(r_n)<\ln\dd$ on $(R_1, r_n)$ for all $n$. Then
$[(1+a_2)ru_1'+a_1ru_2']'=(A-B)rF_1\le-\frac{A-B}{2}re^{u_1}$ and $[a_2ru_1'+(1+a_1)ru_2']'=(A-B)rF_2\le-\frac{A-B}{2}re^{u_2}$ on $[R_1, r_n]$, which imply
{\allowdisplaybreaks
\begin{align*}
0&\le (1+a_2)r_nu_1'(r_n)+a_1r_nu_2'(r_n)\\
&\le (1+a_2)R_1u_1'(R_1)+a_1R_1u_2'(R_1)-\frac{A-B}{2}\int_{R_1}^{r_n}re^{u_1}dr,\\
0&\le a_2r_nu_1'(r_n)+(1+a_1)r_nu_2'(r_n)\\
&\le a_2R_1u_1'(R_1)+(1+a_1)R_1u_2'(R_1)-\frac{A-B}{2}\int_{R_1}^{r_n}re^{u_2}dr.
\end{align*}
}%
By this and Lemma \ref{lemma2-3}, we obtain
\be\label{eq3-4}\int_{R_1}^{r_n}r(e^{u_1}+e^{u_2})dr\le C\;\;\text{for all}\;\,n.\ee
Since $e^{u_1}\le e^{u_2}\le e^{u_2(r_n)}\to 0$ on $[R_1, r_n]$, it follows that
\be\label{eq3-10}\lim_{n\to \iy}\int_{R_1}^{r_n}r(e^{2u_1}+e^{2u_2}+e^{u_1+u_2})dr=0.\ee
Combining \eqref{eq3-10} with Lemma \ref{lemma2-3},  we integrate the Pohozaev identity \eqref{pohozaev} over $(R_1, r)$ for any $r\in (R_1, r_n]$, which yields that
{\allowdisplaybreaks
\begin{align*}
&\frac{A-B}{2}r^2(a_2e^{u_1}+a_1e^{u_2})\le J(ru_1'(r)+2, ru_2'(r)+2)+(A-B)r^2\\
&\quad\times\bigg[a_2e^{u_1}+a_1e^{u_2}
-\frac{a_2(1+a_1)}{2}e^{2u_1}
-\frac{a_1(1+a_2)}{2}e^{2u_2}+a_1a_2e^{u_1+u_2}\bigg]\\
&=J(R_1u_1'(R_1)+2, R_1u_2'(R_1)+2)+o(1)\le C
\end{align*}
}%
holds for all $r\in (R_1, r_n]$. This proves \eqref{eq3-3}.

{\it Step 3.}
Denote $(x_1, y_1):=(2-2\ga,\, 2+\frac{2a_2}{1+a_1}(\ga+N_1)+2N_2)$. Clearly the following system
\begin{equation}\label{eq3-6}
\begin{cases}
J(x, y)=J(x_1, y_1),\\
(1+a_2)x+a_1 y=(1+a_2)x_1+a_1y_1,
\end{cases}
\end{equation}
has at most two distinct solutions, one is just $(x_1, y_1)$, and we denote the other one, if exists, by $(x_2, y_2)$.

Fix any $\theta\in [2.5,\, 3]\setminus\{2-x_1,\, 2-x_2\}$. Since $r_nu_1'(r_n)>0$ and \eqref{eq3-23} gives $R_1u_1'(R_1)<-4$, there exists $t_n\in (R_1, r_n)$ such that $t_nu_1'(t_n)=-\theta$ and $ru_1'(r)<-\theta$ for all $r\in [R_1, t_n)$. We claim the existence of constant $C$ independent of $\e_n$ such that
\be\label{eq3-5}u_2(t_n)+2\ln t_n\ge C\;\;\text{for $n$ sufficiently large}.\ee

Observe that $r^\theta e^{u_1(r)}$ is decreasing for $r\in [R_1, t_n]$, which implies
$$\int_{R_1}^{t_n}re^{u_1}dr\le R_1^\theta e^{u_1(R_1)}\int_{R_1}^{t_n}r^{1-\theta}dr
\le\frac{R_1^2 e^{u_1(R_1)}}{\theta-2}\to 0\;\;\text{as $n\to \iy$}.$$
Since $[(1+a_2)ru_1'+a_1ru_2']'=(A-B)rF_1$ and $|F_1|\le (1+a_1)e^{u_1}$, we can obtain
\begin{align}\label{eq3-18}&(1+a_2)t_nu_1'(t_n)+a_1t_nu_2'(t_n)\\
=&(1+a_2)R_1u_1'(R_1)
+a_1R_1u_2'(R_1)+o(1)\nonumber\end{align}
as $n\to\iy$. This, together with Lemma \ref{lemma2-3}, shows that
$$(1+a_2)(2-\theta)+a_1(2+\lim_{n\to \iy}t_nu_2'(t_n))=(1+a_2)x_1+a_1y_1.$$
Since $2-\theta\not\in\{x_1, x_2\}$, so $J(2-\theta,\, 2+\lim_{n\to \iy}t_nu_2'(t_n))-J(x_1, y_1)\neq 0$. Recall from \eqref{eq3-3} that $t_n^2e^{(u_i+u_j)(t_n)}\le Ct_n^{-2}\to 0$ as $n\to \iy$ for $1\le i, j\le 2$.
Then by \eqref{eq3-10} and Lemma \ref{lemma2-3}, we easily deduce via integrating the Pohozaev identity \eqref{pohozaev} over $(R_1, t_n)$ that
{\allowdisplaybreaks
\begin{align}\label{eq3-19}
&(A-B)t_n^2(a_2e^{u_1(t_n)}+a_1e^{u_2(t_n)})\\
=J&(2+R_1u_1'(R_1), 2+R_1u_2'(R_1))-
J(2+t_nu_1'(t_n), 2+t_nu_2'(t_n))+o(1)\nonumber\\
\to J&(x_1, y_1)- J\left(2-\theta,\, 2+\lim_{n\to \iy}t_nu_2'(t_n)\right)\neq 0\nonumber
\end{align}
}%
as $n\to \iy$. This proves \eqref{eq3-5} since $u_1(t_n)<u_2(t_n)$.

{\it Step 4.} For $k=1, 2$, we consider the scaled functions
$$\hat{u}_{k,n}(r):=u_{k,\e_n}(t_n r)+2\ln t_n,\quad\frac{R_{1,\e_n}}{t_n}\le r\le \frac{r_n}{t_n}.$$
Then $(\hat{u}_{1, n}, \hat{u}_{2, n})$ satisfies
\begin{equation}\label{eq3-7}
\begin{cases}
\hat{u}_{1,n}''+\frac{1}{r}\hat{u}_{1,n}' =(1+a_1)\left((1+a_1)t_n^{-2}e^{2\hat{u}_{1,n}}-e^{\hat{u}_{1,n}}
-a_1t_n^{-2}e^{\hat{u}_{1,n}+\hat{u}_{2,n}}\right)\\
\qquad\qquad\qquad\quad-a_1\left((1+a_2)t_n^{-2}e^{2\hat{u}_{2,n}}-e^{\hat{u}_{2,n}}
-a_2t_n^{-2}e^{\hat{u}_{1,n}+\hat{u}_{2,n}}\right),\\
\hat{u}_{2,n}''+\frac{1}{r}\hat{u}_{2,n}' =(1+a_2)\left((1+a_2)t_n^{-2}e^{2\hat{u}_{2,n}}-e^{\hat{u}_{2,n}}
-a_2t_n^{-2}e^{\hat{u}_{1,n}+\hat{u}_{2,n}}\right)\\
\qquad\qquad\qquad\quad-a_2\left((1+a_1)t_n^{-2}e^{2\hat{u}_{1,n}}-e^{\hat{u}_{1,n}}
-a_1t_n^{-2}e^{\hat{u}_{1,n}+\hat{u}_{2,n}}\right).
\end{cases}
\end{equation}
By \eqref{eq3-3} and \eqref{eq3-5}, we see that
\be\label{eq3-8}|\hat{u}_{2,n}(1)|\le C\;\;\text{for sufficiently large $n$}.\ee

{\it Step 5.} We claim that $\frac{R_{1,\e_n}}{t_n}\to 0$ and $u_{1,\e_n}(t_n)-u_{1,\e_n}(R_{1,\e_n})\to-\iy$ as $n\to \iy$.

Assume by contradiction that up to a subsequence, $t_n/R_1\le C$ for all $n$. Similarly as \eqref{eq3-9}, we can prove that $0\le ru_2'(r)\le R_1u_2'(R_1)\le C$ uniformly on $[R_1, r_n]$. Consequently,
$$u_2(r)-u_2(R_1)\le C\ln\frac{r}{R_1}\le C\;\;\text{uniformly for $r\in [R_1, t_n]$}, $$
which implies from Lemma \ref{lemma2-3} that
\begin{align*}
\int_{R_1}^{t_n}re^{u_1}dr\le \int_{R_1}^{t_n}re^{u_2}dr\le R_1^2e^{u_2(R_1)+C}\left[\left(\frac{t_n}{R_1}\right)^2-1\right]\to 0
\end{align*}
as $n\to\iy$. Recalling the first equation in \eqref{eq5-10} and $|F_k|\le (1+a_k)e^{u_k}$, we obtain
{\allowdisplaybreaks
\begin{align*}
-\theta=t_nu_1'(t_n)&\le R_1u_1'(R_1)+\int_{R_1}^{t_n}r\left[(1+a_1)|F_1|+a_1|F_2|\right]dr\\
&=-2\ga+o(1)\;\;\text{as $n\to \iy$},
\end{align*}
}%
a contradiction with $\theta\le 3$ and $\ga>N_1+2$. This proves $R_1/t_n\to 0$ as $n\to\iy$. Consequently, we deduce from $ru_1'(r)\le -\theta$ on $[R_1, t_n]$ that
$u_1(t_n)-u_1(R_1)\le -\theta\ln\frac{t_n}{R_1}\to -\iy$ as $n\to\iy$.

{\it Step 6.} We claim that $\frac{r_n}{t_n}\to\iy$ as $n\to \iy$.

By \eqref{eq3-3}, \eqref{eq3-10} and the Pohozaev identity \eqref{pohozaev}, it follows that
$$J(ru_1'(r)+2, ru_2'(r)+2)=J(R_1u_1'(R_1)+2, R_1u_2'(R_1)+2)+O(1)\le C$$
uniformly for $r\in [R_1, r_n]$. Hence
\be\label{eq3-11}|ru_1'(r)|+|ru_2'(r)|\le C\;\;\text{uniformly for $r\in [R_1, r_n]$}.\ee
Then by $u_2(r_n)-u_2(R_1)>0$ and Step 5, we have
{\allowdisplaybreaks
\begin{align*}
C\ln\frac{r_n}{t_n}&\ge\int_{t_n}^{r_n}u_1'(r)dr=u_1(r_n)-u_1(t_n)\\
&=u_2(r_n)-u_2(R_1)+u_1(R_1)-u_1(t_n)\to \iy
\end{align*}
}%
as $n\to\iy$. This proves the claim.

{\it Step 7.} We conclude the proof by obtaining a contradiction.

By $u_2(t_n)>u_2(R_1)=u_1(R_1)$, \eqref{eq3-8} and Step 5, we have
\begin{align*}
\hat{u}_{1, n}(1)=u_1(t_n)-u_2(t_n)+\hat{u}_{2, n}(1)\le u_1(t_n)-u_1(R_1)+C\to-\iy
\end{align*}
as $n\to\iy$. Combining this with \eqref{eq3-8} and \eqref{eq3-11}, we conclude that $\hat{u}_{2, n}$ is  uniformly bounded in $C_{loc}((0, \iy))$, while $\hat{u}_{1,n}\to-\iy$ uniformly on any compact subset $K\subset\subset (0, \iy)$ as $n\to\iy$. Up to a subsequence, we may assume that $\hat{u}_{2, n}\to \hat{u}$ in $C_{loc}^2((0,\iy))$, where $\hat{u}$ satisfies
$$
\begin{cases}
\hat{u}''+\frac{1}{r}\hat{u}'=-(1+a_2)e^{\hat{u}}\quad\text{for}\;\,r\in (0, \iy),\\
\int_0^\iy e^{\hat{u}}rdr\le\liminf\limits_{n\to\iy}\int_{R_1}^{r_n}e^{u_2}rdr<+\iy.
\end{cases}
$$
Recalling $u_2'(r)>0$ on $(R_1, r_n)$, we easily conclude that $\hat{u}'(r)\ge 0$ for any $r>0$, namely $\hat{u}$ is increasing on $(0, \iy)$,  which contradicts to $\int_0^\iy re^{\hat{u}}dr<\iy$. This completes the proof.
\end{proof}

\bl\label{lemma2-5} There exists a small $\e_3\in (0, \e_2)$ such that for each $\e\in (0, \e_3)$, there holds $R_{2,\e}<\iy$, $u_{2,\e}'(R_{2,\e})=0$, $u_{1,\e}(R_{2,\e})<u_{2,\e}(R_{2,\e})<\ln\dd$, $|u_{2,\e}(R_{2,\e})+2\ln R_{2,\e}|\le C$ and
\be\label{eq3-12}u_{k,\e}(r)+2\ln r\le C\;\;\text{uniformly for}\;\, r\in [R_{1,\e}, R_{2,\e}],\;\,k=1, 2.\ee
Furthermore, $\frac{R_{2,\e}}{R_{1,\e}}\to\iy$ as $\e\to 0$.\el

\begin{proof}
We divide the proof into four steps.

{\it Step 1.} We claim that $R_{2,\e}<\iy$, $u_{2,\e}'(R_{2,\e})=0$ and $u_{1,\e}(R_{2,\e})<u_{2,\e}(R_{2,\e})<\ln\dd$ for $\e>0$ sufficiently small.

Lemma \ref{lemma2-4} shows $u_1<u_2$ on $(R_1, R_2)$.
By repeating Step 1 of Lemma \ref{lemma2-4}, we can prove
\be\label{eq3-13}\sup_{R_1\le r<R_2}u_2(r)\to -\iy\;\;\text{as}\;\;\e\to 0.\ee
So $u_1<u_2<\ln\dd-1$ on $(R_1, R_2)$ for $\e>0$ small enough. Then \eqref{eq3-9} holds for any $r\in (R_1, R_2)$. Recalling $u_2'>0$ on $[R_1, R_2)$, we have for any $r\in (R_1, R_2)$ that
{\allowdisplaybreaks
\begin{align}\label{eq3-14}
-C\le ru_2'(r)-R_1u_2'(R_1)
\le -\frac{1}{2}\int_{R_1}^r te^{u_2(t)}dt\le -\frac{1}{4}e^{u_2(R_1)}(r^2-R_1^2).
\end{align}
}%
Letting $r\uparrow R_2$, it follows that $R_2<\iy$ and so $u_2'(R_2)=0$.
The proof of Lemma \ref{lemma2-4} also yields $u_1(R_2)<u_2(R_2)$ for $\e>0$ sufficiently small.

{\it Step 2.} We claim that \eqref{eq3-12} holds provided $\e>0$ is sufficiently small.

In fact, since $u_1<u_2<\ln\dd$ on $(R_1, R_2]$,  \eqref{eq3-14} also implies
$\int_{R_1}^{R_2}r(e^{u_1}+e^{u_2})dr\le C$. This fact, together with \eqref{eq3-13}, implies
\be\label{eq3-17}\lim_{\e\to 0}\int_{R_1}^{R_2}r(e^{2u_1}+e^{2u_2}+e^{u_1+u_2})dr=0.\ee
The rest argument is the same as Step 2 of Lemma \ref{lemma2-4}.

{\it Step 3.} We claim that $\frac{R_{2,\e}}{R_{1,\e}}\to \iy$ as $\e\to 0$.

Recalling $ru_2'(r)\le R_1u_2'(R_1)\le C$ for $r\in [R_1, R_2]$, we have $u_2(r)\le u_2(R_1)+C\ln\frac{r}{R_1}$ for all $r\in [R_1, R_2]$. Consequently,
{\allowdisplaybreaks
\begin{align*}
-R_1u_2'(R_1)&=\int_{R_1}^{R_2}(ru_2'(r))'dr=\int_{R_1}^{R_2}r[(1+a_2)F_2-a_2F_1]dr\\
&\ge(1+a_2)\int_{R_1}^{R_2}rF_2dr\ge-(1+a_2)^2\int_{R_1}^{R_2}r e^{u_2(r)}dr\\
&\ge-(1+a_2)^2R_1^2e^{u_2(R_1)}\left[\left(\tfrac{R_2}{R_1}\right)^{C+2}-1\right].
\end{align*}
}%
This proves the claim because Lemma \ref{lemma2-3} gives $R_1^2e^{u_2(R_1)}\to 0$ and $R_1u_2'(R_1)\to C>0$ as $\e\to 0$.

{\it Step 4.} We prove the existence of constant $C$ independent of $\e$ such that $u_{2,\e}(R_{2,\e})+2\ln R_{2,\e}\ge C$ provided $\e>0$ is sufficiently small.

Assume by contradiction that there exist a sequence $\e_n\downarrow 0$ such that $u_{2,\e_n}(R_{2, \e_n})+2\ln R_{2,\e_n}\to -\iy$ as $n\to\iy$. We will omit the subscript $\e_n$ for convenience. Since $u_2$ is increasing on $[R_1, R_2]$, we have
\be\label{eq3-15}r^2e^{u_1(r)}\le r^2e^{u_2(r)}\le R_2^2e^{u_2(R_2)}\to 0\;\;\text{for any $r\in [R_1, R_2]$.}\ee
We consider two cases separately.

{\bf Case 1.} Up to a subsequence, $\sup_{[R_1, R_2]}ru_1'(r)\le -2.5$ for all $n$.

Then $r^{2.5}e^{u_1(r)}$ is decreasing on $[R_1, R_2]$, which implies
\be\label{eq3-16}\int_{R_1}^{R_2}re^{u_1}dr\le 2R_1^2e^{u_1(R_1)}\to 0\;\;\text{as $n\to \iy$}.\ee
Let $(x_1, y_1)$ and $(x_2, y_2)$ be in Step 3 of Lemma \ref{lemma2-4}. Clearly $y_1>2$. Fix any $\theta\in (0, \frac{y_1-2}{2})\setminus\{y_2-2\}$.
Since $u_2'(R_2)=0$ and $u_2'(R_1)=\frac{2a_2}{1+a_1}(\ga+N_1)+2N_2+o(1)=y_1-2+o(1)>\frac{y_1-2}{2}$ for $n$ large, there exists $t_n\in (R_1, R_2)$ such that $t_nu_2'(t_n)=\theta$. By \eqref{eq3-16}  we see that \eqref{eq3-18} holds as $n\to\iy$, which implies
$$(1+a_2)(2+\lim_{n\to\iy}t_nu_1'(t_n))+a_1(2+\theta)=(1+a_2)x_1+a_1y_1.$$
On the other hand, by \eqref{eq3-17} and \eqref{eq3-15}, we can prove via the Pohozaev identity \eqref{pohozaev} that (compare with \eqref{eq3-19})
$$J\left(2+\lim_{n\to \iy}t_nu_1'(t_n),\, 2+\theta \right)-J(x_1, y_1)=0.$$
That is, $(2+\lim_{n\to \iy}t_nu_1'(t_n),\, 2+\theta)$ is also a solution of \eqref{eq3-6}, which yields a contradiction with $2+\theta\not\in\{y_1, y_2\}$. So Case 1 is impossible.

{\bf Case 2. }Up to a subsequence, $\sup_{[R_1, R_2]}ru_1'(r)> -2.5$ for all $n$.

In this case, since $R_1u_1'(R_1)<-4$ by \eqref{eq3-23}, we can repeat the argument of Step 3 in Lemma \ref{lemma2-4} to obtain the existence of $t_n\in (R_1, R_2)$ such that \eqref{eq3-5} holds. Since $u_2$ is increasing on $[R_1, R_2]$, so
$$C\le u_2(t_n)+2\ln t_n\le u_2(R_2)+2\ln R_2\to -\iy$$
as $n\to \iy$, also a contradiction. So Case 2 is also impossible. This completes the proof.
\end{proof}

Lemma \ref{lemma2-5} implies $R_{2,\e}<R_\e^*$ for each $\e\in (0, \e_3)$. Consider the following scaled functions:
\be\label{eq3-20}\bar{u}_k(r)=\bu_{k,\e}(r):=u_{k,\e}(R_{2,\e}r)+2\ln R_{2,\e}\;\,\text{for}\;\, k=1,2,\;\,\e\in (0, \e_3),\ee
where $\frac{R_{1,\e}}{R_{2,\e}}\le r\le 1$.
Then $(\bar{u}_{1}, \bar{u}_{2})$ satisfies
\begin{equation}\label{eq3-21}
\begin{cases}
\bar{u}_{1}''+\frac{1}{r}\bar{u}_{1}' =(1+a_1)\left((1+a_1)R_2^{-2}e^{2\bar{u}_{1}}-e^{\bar{u}_{1}}
-a_1R_2^{-2}e^{\bar{u}_{1}+\bar{u}_{2}}\right)\\
\qquad\qquad\qquad\quad-a_1\left((1+a_2)R_2^{-2}e^{2\bar{u}_{2}}-e^{\bar{u}_{2}}
-a_2R_2^{-2}e^{\bar{u}_{1}+\bar{u}_{2}}\right),\\
\bar{u}_{2}''+\frac{1}{r}\bar{u}_{2}' =(1+a_2)\left((1+a_2)R_2^{-2}e^{2\bar{u}_{2}}-e^{\bar{u}_{2}}
-a_2R_2^{-2}e^{\bar{u}_{1}+\bar{u}_{2}}\right)\\
\qquad\qquad\qquad\quad-a_2\left((1+a_1)R_2^{-2}e^{2\bar{u}_{1}}-e^{\bar{u}_{1}}
-a_1R_2^{-2}e^{\bar{u}_{1}+\bar{u}_{2}}\right).
\end{cases}
\end{equation}
Moreover, $|\bar{u}_2(1)|\le C$ uniformly for $\e$ by Lemma \ref{lemma2-5}. Now we claim that
\be\label{eq3-22}\lim_{\e\to 0}\bar{u}_1(1)=-\iy.\ee

In fact, since $\bar{u}_1\le \bar{u}_2\le \bar{u}_2(1)\le C$ on $[R_1/R_2, 1]$, it follows from \eqref{eq3-21} that $|(r\bar{u}_k')'|\le C$ uniformly for $r\in[R_1/R_2, 1]$ and $k=1, 2$. Consequently,
$$r\bar{u}_1'(r)\le \frac{R_1}{R_2}\bar{u}_1'\left(\frac{R_1}{R_2}\right)
+Cr= R_1u_1'(R_1)+Cr\le -4+Cr,$$
and so $\bar{u}_1(r)\ge -4\ln r+\bar{u}_1(1)-C$ for any $r\in[R_1/R_2, 1]$. Recalling $\int_{R_1}^{R_2}re^{u_1}dr\le C$ uniformly for all $\e\in (0, \e_3)$ by Step 2 of Lemma \ref{lemma2-5}, we have
$$C\ge\int_{R_1}^{R_2}re^{u_1}dr=\int_{\frac{R_1}{R_2}}^1r e^{\bar{u}_1}dr\ge \frac{1}{2}\left(\left(\tfrac{R_2}{R_1}\right)^2-1\right)e^{\bar{u}_1(1)-C}.$$
This proves \eqref{eq3-22} since $R_2/R_1\to\iy$ as $\e\to 0$.

Again by $|(r\bar{u}_k')'|\le C$ on $[R_1/R_2, 1]$ for $k=1, 2$, it follows that $\bar{u}_1\to-\iy$ uniformly on any compact subset $K\subset\subset (0, 1]$ as $\e\to 0$ and $\bar{u}_2$ is uniformly bounded in $C_{loc}((0, 1])$.

\bl\label{lemma2-6}
$\lim\limits_{\e\to 0}\int_{R_{1,\e}}^{R_{2,\e}}r e^{u_{1,\e}}dr=0$. Consequently,
\be\label{eq3-24}\lim_{\e\to 0}R_{2, \e}u_{1,\e}'(R_{2,\e})=-2\frac{A-B}{A}\ga+\frac{2B}{A}N_1+\frac{2a_1}{1+a_2}N_2.\ee
\el

\begin{proof} Assume by contradiction that there exists a sequence $\e_n\downarrow 0$ such that
\be\label{eq3-25}\lim_{n\to\iy}\int_{R_{1,\e_n}}^{R_{2,\e_n}}r e^{u_{1,\e_n}}dr>0.\ee
Again we will omit the subscript $\e_n$ for convenience.
We consider two cases separately.

{\bf Case 1.} Up to a subsequence, $\sup_{[R_1, R_2]}ru_1'(r)\le -2.5$ for all $n$.

Then \eqref{eq3-16} holds, a contradiction with \eqref{eq3-25}. So Case 1 is impossible.

{\bf Case 2. }Up to a subsequence, $\sup_{[R_1, R_2]}ru_1'(r)> -2.5$ for all $n$.

In this case, since $R_1u_1'(R_1)<-4$ by \eqref{eq3-23}, we can repeat the argument of Step 3 in Lemma \ref{lemma2-4}. In particular, there exist a constant $\theta\in (2.5, 3)$ and a sequence $t_n\in (R_1, R_2)$ such that $t_nu_1'(t_n)=-\theta$, $ru_1'(r)<-\theta$ for $r\in [R_1, t_n)$ and $u_2(t_n)+2\ln t_n\ge C$ for $n$ large. Then by the same argument used in \eqref{eq3-16}, we have
\be\label{eq3-26}\int_{R_1}^{t_n}r e^{u_1}dr\le \frac{1}{\theta-2}R_1^2e^{u_1(R_1)}\to 0\;\,\text{as}\;\,n\to\iy.\ee
On the other hand, since $\bar{u}_2\le \bar{u}_2(1)\le C$ on $[R_1/R_2, 1]$, we have
\begin{align*}
C\le t_n^2e^{u_2(t_n)}=\left(\frac{t_n}{R_2}\right)^2e^{\bar{u}_2(\frac{t_n}{R_2})}\le C\left(\frac{t_n}{R_2}\right)^2,
\end{align*}
which implies $t_n/R_2\ge C>0$ for all $n$. Recalling that $\bar{u}_1\to-\iy$ uniformly on $[C, 1]$, we conclude that
$$\int_{t_n}^{R_2}re^{u_1}dr=\int_{\frac{t_n}{R_2}}^1re^{\bar{u}_1}dr\to 0\;\,\text{as}\;\,n\to\iy.$$
Combining this with \eqref{eq3-26}, we obtain a contradiction with \eqref{eq3-25} again.

 Therefore, $\lim_{\e\to 0}\int_{R_{1}}^{R_{2}}r e^{u_{1}}dr=0$. Consequently, by the same argument as \eqref{eq3-18}, we have
$$(1+a_2)R_2u_1'(R_2)+a_1R_2u_2'(R_2)=(1+a_2)R_1u_1'(R_1)
+a_1R_1u_2'(R_1)+o(1).$$
Then \eqref{eq3-24} follows directly from Lemma \ref{lemma2-3} and $u_2'(R_2)=0$.
\end{proof}

For each fixed $\e\in (0, \e_3)$, we define
$$R_3=R_{3,\e}:=\sup\left\{r\in [R_{2, \e}, R_\e^*)\,|\, u_1<u_2\;\text{on}\; [R_{2,\e}, r)\right\}.$$
Then $R_{2}<R_{3}\le R^*$. If there exists $t\in (R_{2}, R_{3})$
such that $u_{2}'(t)=0$ and $u_{2}'(r)<0$ for $r\in (R_{2}, t)$, then Lemma \ref{lemma2-5} yields $u_1<u_2<\ln\dd$ on $[R_{2}, t]$. Consequently, $F_2<F_1<0$ on $[R_{2}, t]$ and so
$$0=t u_{2}'(t)-R_{2}u_{2}'(R_{2})
=\int_{R_{2}}^{t}r[(1+a_2)F_2-a_2F_1]dr<0,$$
a contradiction. Therefore,
\be\label{eq3-27}u_{2}'(r)<0\;\,\text{for any}\;\,r\in(R_2, R_3).\ee

Consider the scaled functions $\bu_k$ defined in \eqref{eq3-20} for $r\in (\frac{R_1}{R_2}, \frac{R_3}{R_2})$. By \eqref{eq3-27} and the definition of $R_3$, we have $\bu_1(r)<\bu_2(r)\le \bu_2(1)\le C$ for all $r\in (\frac{R_1}{R_2}, \frac{R_3}{R_2})$. This, together with \eqref{eq3-21}, gives $|(r\bu_k')'(r)|\le Cr$ for all $r\in (\frac{R_1}{R_2}, \frac{R_3}{R_2})$. Consequently, $\bar{u}_1\to-\iy$ uniformly on any compact subset $K\subset\subset (\frac{R_1}{R_2}, \frac{R_3}{R_2})$ as $\e\to 0$ and $\bar{u}_2$ is uniformly bounded in $C_{loc}((\frac{R_1}{R_2}, \frac{R_3}{R_2}))$.
Since $\bu_2(1)-\bu_1(1)\to\iy$ as $\e\to 0$, we conclude from the definition of $R_3$ that
\be\label{eq3-28}\lim_{\e\to 0}\frac{R_{3}}{R_{2}}=\iy.\ee
Then, for any constant $b>1$, there holds
\be\label{eq3-28-1}\lim_{\e\to 0}\int_{R_1}^{bR_2}re^{u_1}dr=\lim_{\e\to 0}\int_{R_1}^{R_2}re^{u_1}dr+\lim_{\e\to 0}\int_{1}^{b}re^{\bu_1}dr=0.\ee

\bl\label{lemma2-7}$\bu_{2,\e}\to \om_2$ in $C_{loc}^2((0,\iy))$ as $\e\to 0$, where
\be\label{eq3-29}\om_2(r)=\ln\frac{2D^2(D^2-4)r^{D-2}}{(1+a_2)(D+2+(D-2)r^D)^2}
\;\;\text{for}\;\,r\in (0,\iy).\ee
Here $D:=\frac{2a_2}{1+a_1}(\ga+N_1)+2N_2+2$. Consequently,
\be\label{eq3-30}\int_{0}^\iy re^{\om_2}dr=\frac{2}{1+a_2}D.\ee
\el

\begin{proof}
Recall \eqref{eq3-17} and Lemma \ref{lemma2-3}. By integrating the Pohozaev identity over $(R_1, R_2)$, we obtain
{\allowdisplaybreaks
\begin{align*}
&J(R_2u_1'(R_2)+2, R_2u_2'(R_2)+2)+
(A-B)R_2^2\bigg[a_2e^{u_1(R_2)}+a_1e^{u_2(R_2)}\\
&\qquad-\frac{a_2(1+a_1)}{2}e^{2u_1(R_2)}
-\frac{a_1(1+a_2)}{2}e^{2u_2(R_2)}+a_1a_2e^{u_1(R_2)+u_2(R_2)}\bigg]\\
=&J(R_1u_1'(R_1)+2, R_1u_2'(R_1)+2)+o(1),\;\;\text{as}\;\,\e\to 0.
\end{align*}
}%
Recall that $R_2^2e^{u_1(R_2)}=e^{\bu_1(1)}\to 0$ and $R_2^2e^{u_i(R_2)+u_j(R_2)}=R_2^{-2}e^{\bu_i(1)+\bu_j(1)}\to 0$ for $1\le i, j\le 2$ as $\e\to 0$.
Combining these with Lemma \ref{lemma2-3} and \eqref{eq3-24}, we have
{\allowdisplaybreaks
\begin{align}\label{eq3-48}
&a_1(A-B)R_2^2e^{u_2(R_2)}\\
=&J(R_1u_1'(R_1)+2, R_1u_2'(R_1)+2)\nonumber\\
&-J(R_2u_1'(R_2)+2, R_2u_2'(R_2)+2)+o(1)\nonumber\\
=&J\Big(2-2\ga,\; \underbrace{\frac{2a_2}{1+a_1}(\ga+N_1)+2N_2+2}_{=:D}\Big)\nonumber\\
&-J\bigg(\underbrace{2-2\frac{A-B}{A}\ga+\frac{2B}{A}N_1+\frac{2a_1}{1+a_2}N_2}
_{=\frac{a_1}{1+a_2}D-(2\ga-2)-\frac{2a_1}{1+a_2}},\; 2\bigg)+o(1)\nonumber\\
=&\frac{a_1(A-B)}{2(1+a_2)}(D^2-4)+o(1),\;\;\text{as}\;\,\e\to 0.\nonumber
\end{align}
}%
Hence $e^{\bu_2(1)}=R_2^2e^{u_2(R_2)}\to \frac{1}{2(1+a_2)}(D^2-4)$ as $\e\to 0$.

Recalling that $\bar{u}_2$ is uniformly bounded in $C_{loc}((\frac{R_1}{R_2}, \frac{R_3}{R_2}))$, up to a subsequence, we may assume that $\bu_2\to\om_2$ in $C_{loc}^2((0,\iy))$. Clearly, $\om_2$ satisfies
\be\label{eq3-31}
\begin{cases}
\begin{split}
&\om_2''+\frac{1}{r}\om_2'=-(1+a_2)e^{\om_2},\\
&\om_2(r)\le \om_2(1)=\ln\frac{D^2-4}{2(1+a_2)},\\
\end{split}\quad\text{for}\;\,r\in (0, \iy).
\end{cases}
\ee
Consequently, a simple contrary argument shows $\int_{0}^\iy re^{\om_2}dr<\iy$.
Recalling $0\le r\bu_2'(r)= R_2ru_2'(R_2r)\le R_1u_2'(R_1)=D-2+o(1)$ for all $r\in [\frac{R_1}{R_2}, 1]$, we obtain (note $\om_2'(1)=0$)
$$2\ga_2:=\lim_{r\to 0}r\om_2'(r)\in (0, D-2].$$ In conclusion,
$\om_2$ is a radial solution of the Liouville equation with singular sources
$$\Delta v+(1+a_2)e^v=4\pi\ga_2\dd_0\;\;\text{in}\;\,\R^2, \quad\int_0^\iy re^vdr<\iy.$$
By a well-known classification result due to Prajapat and Tarantello \cite{PT}, there holds
$$\om_2(r)+\ln(1+a_2)=\ln\frac{8\la(1+\ga_2)^2 r^{2\ga_2}}{(1+\la r^{2\ga_2+2})^2}$$
for some constant $\la>0$. By $\om_2'(1)=0$ and $\om_2(1)=\ln\frac{D^2-4}{2(1+a_2)}$, a direct computation gives $\ga_2=\frac{D-2}{2}$ and $\la=\frac{D-2}{D+2}$. Consequently, we see that \eqref{eq3-29}-\eqref{eq3-30} hold. The above argument actually shows that $\bu_2\to \om_2$ in $C_{loc}^2((0,\iy))$ as $\e\to 0$ (i.e., not only along a subsequence). This completes the proof.
\end{proof}

\bl\label{lemma2-8}There exists a small $\e_4\in (0, \e_3)$ such that for each $\e\in (0, \e_4)$, there holds $R_{3,\e}<R_\e^*$. Consequently, $u_{1,\e}(R_{3,\e})=u_{2,\e}(R_{3,\e})$.\el

\begin{proof}
Assume by contradiction that there exists a sequence $\e_n\downarrow 0$ such that $R_{3,\e_n}=R_{\e_n}^*$. Since $u_{1,\e_n}<u_{2,\e_n}\le u_{2,\e_n}(R_{2,\e_n})<\ln\dd$ on $(R_{1,\e_n}, R_{3, \e_n})$, we see from the definition of $R_{\e_n}^*$ that $R_{3,\e_n}=\iy$, namely $(u_{1,\e_n}, u_{2,\e_n})$ is an entire solution and $u_{1,\e_n}<u_{2,\e_n}<\ln\dd$ on $(R_{1, \e_n}, \iy)$. By Theorem A, we see that $(u_{1,\e_n}, u_{2,\e_n})$ is a non-topological solution and there exist constants $\bb_{k,\e_n}>1$ such that $ru_{k,\e_n}'(r)\to -2\bb_{k,\e_n}$ as $r\to \iy$ for $k=1, 2$. Clearly, $\bb_{2,\e_n}\le \bb_{1,\e_n}$ for all $n$. Again we will omit the subscript $\e_n$ for convenience.

By Lemma \ref{lemma2-7}, we can fix a large constant $b>1$ such that $b^2e^{\om_2(b)}<\dd$ and $\int_{b}^{\iy}re^{\om_2}dr<\dd/2$. By the dominated convergence theorem,
$$\int_{R_1}^{bR_2}re^{u_2}dr=\int_{\frac{R_1}{R_2}}^{b}re^{\bu_2}dr\to \int_{0}^{b}re^{\om_2}dr\;\,\text{as}\;\,n\to \iy.$$
Recall \eqref{eq3-28-1}, \eqref{eq3-30} and $u_1<u_2\le u_2(R_2)\to -\iy$ on $(R_1, \iy)$. Then for $n$ sufficiently large, we have
{\allowdisplaybreaks
\begin{align*}
&|\bu_2(b)-\om_2(b)|<\dd,\quad \left|\int_{R_1}^{bR_2}re^{u_2}dr-\frac{2}{1+a_2}D\right|<\dd,\\
&(A-B)^2\int_{R_1}^{bR_2}r(e^{u_1}+e^{2u_1}+e^{2u_2}+e^{u_1+u_2})dr<\dd.
\end{align*}
}%
Recalling \eqref{eq5-10} and \eqref{eq4-0}, we have
{\allowdisplaybreaks
\begin{align*}
bR_2u_1'(bR_2)\ge& R_1u_1'(R_1)+a_1\int_{R_1}^{bR_2}r e^{u_2}dr\\
&-\int_{R_1}^{bR_2}r\left[(1+a_1)e^{u_1}+a_1(1+a_1)e^{u_1+u_2}
+a_1(1+a_2)e^{2u_2}\right]dr\\
\ge & -2\ga-\dd+a_1\left(\frac{2}{1+a_2}D-\dd\right)-\dd\\
=&-2\frac{A-2B}{A}\ga+\frac{4B}{A}N_1+\frac{4a_1}{1+a_2}(N_2+1)-(2+a_1)\dd,
\end{align*}
}%
and
{\allowdisplaybreaks
\begin{align}\label{eq3-35}
bR_2u_2'(bR_2)\le& R_1u_2'(R_1)-(1+a_2)\int_{R_1}^{bR_2}r e^{u_2}dr\nonumber\\
&+\int_{R_1}^{bR_2}r\left[(1+a_2)^2e^{2u_2}+a_2e^{u_1}
+a_1a_2e^{u_1+u_2}\right]dr\nonumber\\
\le & \frac{2a_2}{1+a_1}(\ga+N_1)+2N_2+\dd-(1+a_2)\left(\frac{2}{1+a_2}D
-\dd\right)+\dd\nonumber\\
=&-\frac{2a_2}{1+a_1}(\ga+N_1)-2N_2-4+(3+a_2)\dd\nonumber\\
<& -4-\frac{4a_2}{1+a_1}-2(2+a_1)\dd,
\end{align}
}%
where we have used $\frac{2a_2}{1+a_1}(\ga-2)>7(1+a_1+a_2)\dd$ (by \eqref{eq3-1}) to obtain the last inequality.
Recalling $u_1<u_2\le u_2(R_2)<\ln\dd$ on $[R_2, \iy)$, we have $F_2<F_1<0$ and so $(ru_2')'(r)=r[(1+a_2)F_2-a_2F_1]<0$ on $[R_2, \iy)$. Consequently, $ru_2'(r)\le bR_2u_2'(bR_2)$ for any $r\ge bR_2$, which implies
\be\label{eq3-32}-2\bb_2\le-4-\frac{4a_2}{1+a_1}-2(2+a_1)\dd.\ee
On the other hand, by $ru_2'(r)\le bR_2u_2'(bR_2)<-4$ for $r\ge bR_2$, we also have
{\allowdisplaybreaks
\begin{align}\label{eq3-36}
\int_{bR_2}^\iy r e^{u_1}dr&\le\int_{bR_2}^\iy r e^{u_2}dr \le (bR_2)^4e^{u_2(bR_2)}\int_{bR_2}^\iy r^{-3}dr\nonumber\\
&=\frac{1}{2}b^2e^{\bu_2(b)}
\le\frac{1}{2}b^2e^{\om_2(b)+\dd}\le\frac{1}{2}\dd e^{\dd}<\dd.
\end{align}
}%
Consequently,
{\allowdisplaybreaks
\begin{align*}
-2\bb_1&=bR_2u_1'(bR_2)+\int_{bR_2}^\iy (ru_1'(r))'dr\\
&\ge bR_2u_1'(bR_2)\\
&\quad-\int_{bR_2}^\iy
r\left[(1+a_1)e^{u_1}+a_1(1+a_1)e^{u_1+u_2}+a_1(1+a_2)e^{2u_2}\right]dr\\
&\ge bR_2u_1'(bR_2)-(2+a_1)\dd\\
&\ge -2\frac{A-2B}{A}\ga+\frac{4B}{A}N_1+\frac{4a_1}{1+a_2}(N_2+1)-2(2+a_1)\dd.
\end{align*}
}%
This, together with \eqref{eq3-32} and $\bb_2\le \bb_1$, gives
$$\frac{A-2B}{A}(\ga-1)\ge \frac{2B}{A}(N_1+1)+\frac{2a_1}{1+a_2}(N_2+1)+1+\frac{2a_2}{1+a_1},$$
However, $\al_1\ge 1$ and \eqref{eq2-5} give
$$\frac{A-2B}{A}(\ga-1)\le \frac{2B}{A}(N_1+1)+\frac{2a_1}{1+a_2}(N_2+1),$$
which yields a contradiction. This completes the proof.
\end{proof}

\begin{lemma}\label{lemma2-9} There hold
$\lim\limits_{\e\to 0}\int_{R_{1,\e}}^{R_{3,\e}}r e^{u_{1,\e}}dr=0$, $\lim\limits_{\e\to 0}\int_{R_{1,\e}}^{R_{3,\e}}r e^{u_{2,\e}}dr=\frac{2}{1+a_2}D$ and $\lim\limits_{\e\to 0}R_{3,\e}^2 e^{u_{k,\e}(R_{3,\e})}=0$ for $k=1, 2$. Consequently,
{\allowdisplaybreaks
\begin{align}\label{eq3-33}
&\lim_{\e\to 0}R_{3,\e}u_{1,\e}'(R_{3,\e})=-2\frac{A-2B}{A}\ga+\frac{4B}{A}N_1+
\frac{4a_1}{1+a_2}(N_2+1)\ge -2,\\
\label{eq3-34}&\lim_{\e\to 0}R_{3,\e}u_{2,\e}'(R_{3,\e})=-\frac{2a_2}{1+a_1}(\ga+N_1)-
2N_2-4<-4.
\end{align}
}%
Furthermore, $\lim\limits_{\e\to 0}R_{3,\e}u_{1,\e}'(R_{3,\e})=-2$ if and only if $A-2B>0$ and $\al_1=1$.
\end{lemma}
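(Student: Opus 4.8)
The strategy is to pass everything to the rescaled variable of Lemma~\ref{lemma2-7}, where $\bu_{2,\e}\to\om_2$ in $C^2_{loc}((0,\iy))$, and to glue this local convergence to two uniform tail estimates, one near $r=0$ and one near $R_{3,\e}$. I freely use the facts already established: $R_{2,\e}/R_{1,\e}\to\iy$ and $R_{3,\e}/R_{2,\e}\to\iy$; $u_{1,\e}<u_{2,\e}<\ln\dd$ and $u_{2,\e}\to-\iy$ uniformly on $[R_{1,\e},R_{3,\e}]$ (so $\eqref{eq5-10-2}$ gives $F_2<F_1<0$ there, and every integral $\int_{R_{1,\e}}^{R_{3,\e}}r(e^{2u_1}+e^{2u_2}+e^{u_1+u_2})\,dr$ is $o(1)$ times a bounded quantity); $u_{2,\e}'<0$ on $(R_{2,\e},R_{3,\e})$; $\int_{R_{1,\e}}^{R_{2,\e}}re^{u_{1,\e}}dr\to0$ (Lemma~\ref{lemma2-6}); the boundary values $R_{1,\e}u_{1,\e}'(R_{1,\e})\to-2\ga$, $R_{1,\e}u_{2,\e}'(R_{1,\e})\to D-2$ (Lemma~\ref{lemma2-3}); $u_{1,\e}(R_{3,\e})=u_{2,\e}(R_{3,\e})$ (Lemma~\ref{lemma2-8}); and the explicit form \eqref{eq3-29}--\eqref{eq3-30} of $\om_2$ (with $D$ as in Lemma~\ref{lemma2-7}), from which $re^{\om_2(r)}=O(r^{D-1})$ as $r\to0$, and $r\om_2'(r)\to-(D+2)$, $r^2e^{\om_2(r)}\to0$ as $r\to\iy$.

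The core estimate is on the outer interval $[R_{2,\e},R_{3,\e}]$. Since $F_2<F_1<0$ there, $(ru_{2,\e}')'=r[(1+a_2)F_2-a_2F_1]<rF_2<0$, so $ru_{2,\e}'$ is strictly decreasing on $[R_{2,\e},R_{3,\e}]$; since $R_{2,\e}u_{2,\e}'(R_{2,\e})=0$, this also re-confirms $u_{2,\e}'\le0$. Now fix a large $b>1$. Then $bR_{2,\e}u_{2,\e}'(bR_{2,\e})=b\bu_{2,\e}'(b)\to b\om_2'(b)$, which is $<-3$ once $b$ is large; hence for $\e$ small, $ru_{2,\e}'(r)\le b\bu_{2,\e}'(b)=:-\theta_b<-3$ for all $r\in[bR_{2,\e},R_{3,\e}]$. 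Integrating $u_{2,\e}'(r)\le-\theta_b/r$ from $bR_{2,\e}$ gives $r^2e^{u_{2,\e}(r)}\le(bR_{2,\e})^2e^{u_{2,\e}(bR_{2,\e})}(bR_{2,\e}/r)^{\theta_b-2}$ on $[bR_{2,\e},R_{3,\e}]$, whence (using $\theta_b>2$)
\[\int_{bR_{2,\e}}^{R_{3,\e}}re^{u_{2,\e}}dr\le\frac{b^2e^{\bu_{2,\e}(b)}}{\theta_b-2},\qquad R_{3,\e}^2e^{u_{2,\e}(R_{3,\e})}\le b^2e^{\bu_{2,\e}(b)}.\]
Letting $\e\to0$ and then $b\to\iy$, we have $b^2e^{\bu_{2,\e}(b)}\to b^2e^{\om_2(b)}\to0$, which gives $\lim_{\e\to0}R_{3,\e}^2e^{u_{k,\e}(R_{3,\e})}=0$ (the case $k=1$ via Lemma~\ref{lemma2-8}) and that the contribution of $[bR_{2,\e},R_{3,\e}]$ to $\int re^{u_{2,\e}}$ is uniformly small for $b$ large.

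Next I compute the masses. On the inner side $\bu_{2,\e}$ is increasing on $[R_{1,\e}/R_{2,\e},1]$, so $\int_{R_{1,\e}/R_{2,\e}}^{a}re^{\bu_{2,\e}}dr\le\tfrac{a^2}{2}e^{\bu_{2,\e}(a)}\to\tfrac{a^2}{2}e^{\om_2(a)}$, which $\to0$ as $a\to0$. On the compact middle $[a,b]$, $\bu_{2,\e}\to\om_2$ uniformly (with $\bu_{2,\e}\le\bu_{2,\e}(1)\le C$), so $\int_a^bre^{\bu_{2,\e}}dr\to\int_a^bre^{\om_2}dr$. Combining with the outer bound and letting $a\to0$, $b\to\iy$ gives $\limsup_{\e\to0}\int_{R_{1,\e}}^{R_{3,\e}}re^{u_{2,\e}}dr\le\int_0^\iy re^{\om_2}dr$; Fatou's lemma gives the reverse inequality, so $\int_{R_{1,\e}}^{R_{3,\e}}re^{u_{2,\e}}dr\to\int_0^\iy re^{\om_2}dr=\frac{2D}{1+a_2}$. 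For $u_{1,\e}$: $\int_{R_{1,\e}}^{R_{2,\e}}re^{u_{1,\e}}dr\to0$ by Lemma~\ref{lemma2-6}; $\int_{R_{2,\e}}^{bR_{2,\e}}re^{u_{1,\e}}dr=\int_1^bre^{\bu_{1,\e}}dr\to0$ since $\bu_{1,\e}\to-\iy$ uniformly on $[1,b]$; and $\int_{bR_{2,\e}}^{R_{3,\e}}re^{u_{1,\e}}dr\le\int_{bR_{2,\e}}^{R_{3,\e}}re^{u_{2,\e}}dr$ is small by the outer bound; hence $\int_{R_{1,\e}}^{R_{3,\e}}re^{u_{1,\e}}dr\to0$. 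Finally, integrating the two equations of \eqref{eq5-10} over $(R_{1,\e},R_{3,\e})$ (all quadratic terms being $o(1)$) yields
\[R_{3,\e}u_{1,\e}'(R_{3,\e})=-2\ga-(1+a_1)\int_{R_{1,\e}}^{R_{3,\e}}re^{u_{1,\e}}dr+a_1\int_{R_{1,\e}}^{R_{3,\e}}re^{u_{2,\e}}dr+o(1),\]
\[R_{3,\e}u_{2,\e}'(R_{3,\e})=(D-2)+a_2\int_{R_{1,\e}}^{R_{3,\e}}re^{u_{1,\e}}dr-(1+a_2)\int_{R_{1,\e}}^{R_{3,\e}}re^{u_{2,\e}}dr+o(1);\]
plugging in the two masses gives $R_{3,\e}u_{2,\e}'(R_{3,\e})\to-D-2$, i.e.\ \eqref{eq3-34} (and $-D-2<-4$ because $D>2$), and $R_{3,\e}u_{1,\e}'(R_{3,\e})\to-2\ga+\frac{2a_1D}{1+a_2}$; expanding $D$ and rewriting with $A,B$ turns this into the expression in \eqref{eq3-33}. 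A direct substitution of \eqref{eq2-5} (equivalently $g(\al_1,\al_2)=0$ together with \eqref{eq2-4}) identifies that expression with $2\al_1-4$, which is $\ge-2$ since $\al_1\ge1$, with equality iff $\al_1=1$; and \eqref{eq2-5} shows $\al_1=1$ forces $A-2B>0$ (when $A-2B\le0$, \eqref{eq2-5} gives $\al_1>1$), which is the last assertion.

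The main difficulty is the uniform control on the \emph{growing} outer interval $[bR_{2,\e},R_{3,\e}]$: showing simultaneously that no Liouville mass of $u_{2,\e}$ leaks out there and that $R_{3,\e}^2e^{u_{2,\e}(R_{3,\e})}\to0$. The leverage is the monotonicity of $ru_{2,\e}'$ on $[R_{2,\e},R_{3,\e}]$ combined with the sharp decay rate $r\om_2'(r)\to-(D+2)<-4$ read off from \eqref{eq3-29}, which makes the exponent $\theta_b-2$ strictly positive (indeed $>1$) for $b$ large, so that both tail integrals above are finite and tend to $0$ as $b\to\iy$.
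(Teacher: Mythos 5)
Your proof is correct and follows essentially the same route as the paper: establish monotonicity of $ru_{2,\e}'$ on $[R_{2,\e},R_{3,\e})$, use this to show that negligible mass of $e^{u_{2,\e}}$ escapes beyond $bR_{2,\e}$ (and that $R_{3,\e}^2e^{u_{2,\e}(R_{3,\e})}\to0$), combine with the local convergence $\bar u_{2,\e}\to\omega_2$ and $\int_0^\iy re^{\omega_2}dr=\tfrac{2D}{1+a_2}$ to compute the masses, and then integrate the ODE. The only cosmetic difference is that you read the uniform outer decay exponent directly from $r\omega_2'(r)\to-(D+2)<-4$, whereas the paper imports the corresponding bound $bR_{2,\e}u_{2,\e}'(bR_{2,\e})<-4$ from the arithmetic of \eqref{eq3-35}--\eqref{eq3-36}; these are equivalent.
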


\begin{proof}
Given any $\mu\in (0, \dd)$, there exists a large constant $b_\mu>1$ such that $b_\mu^2e^{\om_2(b_\mu)}<\mu$ and $\int_{b_\mu}^{\iy}re^{\om_2}dr<\mu/2$. Then by a similar argument used in Lemma \ref{lemma2-8}, we have for $\e>0$ sufficiently small that
{\allowdisplaybreaks
\begin{align*}
&|\bu_2(b_\mu)-\om_2(b_\mu)|<\mu,\quad \left|\int_{R_1}^{b_\mu R_2}re^{u_2}dr-\frac{2}{1+a_2}D\right|<\mu,\\
&(A-B)^2\int_{R_1}^{b_\mu R_2}r(e^{u_1}+e^{2u_1}+e^{2u_2}+e^{u_1+u_2})dr<\mu,\\
&\left|R_1u_2'(R_1)-\frac{2a_2}{1+a_1}(\ga+N_1)-2N_2\right|<\mu.
\end{align*}
}%
Consequently, by repeating the argument of \eqref{eq3-35}-\eqref{eq3-36}, we can prove
$$\int_{b_\mu R_2}^{R_3} r e^{u_1}dr\le\int_{b_\mu R_2}^{R_3} r e^{u_2}dr \le\frac{1}{2}b_\mu^2e^{\om_2(b_\mu)+\mu}<\mu,$$
and
\begin{align*}
R_3^2e^{u_1(R_3)}=R_3^2e^{u_2(R_3)}<(b_\mu R_2)^2e^{u_2(b_\mu R_2)}=
b_\mu^2e^{\bu_2(b_\mu)}\le b_\mu^2e^{\om_2(b_\mu)+\mu}\le 2\mu,
\end{align*}
namely $R_3^2e^{u_k(R_3)}<2\mu$, $\int_{R_1}^{R_3}re^{u_1}dr<2\mu$ and $|\int_{R_1}^{ R_3}re^{u_2}dr-\frac{2}{1+a_2}D|<2\mu$ for $\e>0$ sufficiently small. This proves
\be\label{eq3-37}\lim\limits_{\e\to 0}R_3^2e^{u_k(R_3)}=0,\;\lim\limits_{\e\to 0}\int_{R_{1}}^{R_{3}}r e^{u_{1}}dr=0,\;\lim\limits_{\e\to 0}\int_{R_{1}}^{R_{3}}r e^{u_{2}}dr=\frac{2}{1+a_2}D.\ee
Recalling $u_1<u_2\le u_2(R_2)\to -\iy$ on $(R_1, R_3)$ as $\e\to 0$, we have
$$\lim_{\e\to 0}\int_{R_1}^{R_3}r\left(e^{2u_1}+e^{2u_2}+e^{u_1+u_2}\right)dr=0.$$
Consequently, we integrate \eqref{eq5-10} over $(R_1, R_3)$ to derive
{\allowdisplaybreaks
\begin{align*}
&\lim_{\e\to 0}R_3u_1'(R_3)=\lim_{\e\to 0}\left[R_1u_1'(R_1)+a_1\int_{R_1}^{R_3}re^{u_2}dr\right]
=-2\ga+\frac{2a_1}{1+a_2}D\\
&\quad=-2\frac{A-2B}{A}\ga+\frac{4B}{A}N_1+
\frac{4a_1}{1+a_2}(N_2+1)\ge-2,\;\,\text{(by \eqref{eq2-5-1})}\\
&\lim_{\e\to 0}R_3u_2'(R_3)=\lim_{\e\to 0}\left[R_1u_2'(R_1)-(1+a_2)\int_{R_1}^{R_3}re^{u_2}dr\right]\\
&\quad=\frac{2a_2}{1+a_1}(\ga+N_1)+2N_2-2D
=-\frac{2a_2}{1+a_1}(\ga+N_1)-
2N_2-4.
\end{align*}
}%
This completes the proof.
\end{proof}

Fix any constant $\vartheta\in [0, \dd)$ such that
\be\label{eq6-1}\vartheta=0\;\;\text{if}\;\,\al_1>1\quad
\text{and}\quad\vartheta>0\;\;\text{if}\;\,\al_1=1.\ee
Then by Lemma \ref{lemma2-9}, there exists $\e_5\in (0, \e_4)$ such that $R_{3,\e}^2e^{u_{1,\e}(R_{3,\e})}<\dd$ and $R_{3,\e}u_1'(R_{3,\e})>-2-\vartheta>R_{3,\e}u_2'(R_{3,\e})+1$ for any $\e\in (0, \e_5)$. For each $\e\in (0, \e_5)$, we define
{\allowdisplaybreaks
\begin{align*}
&R_4=R_{4,\e}:=\sup\left\{r\in [R_{3,\e}, R_\e^*)\,|\, ru_{1,\e}'(r)>-2-\vartheta\right\},\\
&R_5=R_{5,\e}:=\sup\left\{r\in (R_{3,\e}, R_\e^*)\,|\, u_{1,\e}>u_{2,\e}\;\,\text{on}\;\,(R_{3,\e}, r)\right\}.
\end{align*}
}%
Clearly, $R_{4,\e}, R_{5,\e}>R_{3,\e}$ for all $\e\in (0, \e_5)$.

\bl\label{lemma2-10} There exists a small $\e_6\in (0, \e_5)$ such that for each $\e\in (0, \e_6)$, $u_{2,\e}<u_{1,\e}<\ln\dd$ on $(R_{3,\e}, R_{5,\e})$, $ru_{1,\e}'(r)$ is strictly decreasing on $(R_{3,\e}, R_{5,\e})$ and $R_{4,\e}\le R_{5,\e}$. In particular, if $\al_1>1$, then $R_{4,\e}<R_{5, \e}$, namely $R_{4,\e}u_{1,\e}'(R_{4,\e})=-2$, $ru_{1,\e}'(r)>-2$ on $[R_{3,\e}, R_{4,\e})$ and $ru_{1,\e}'(r)<-2$ on $(R_{4,\e}, R_{5,\e})$. \el

\begin{proof} We divide the proof into four steps.

{\it Step 1.} We claim that
\be\label{eq3-38}\sup_{[R_{3,\e}, R_{5,\e})}u_{1,\e}\to-\iy\;\;\text{as}\;\,\e\to 0.\ee

Suppose by contradiction that there exist a sequence $\e_n\downarrow 0$ and a constant $c_0<\ln\dd$ such that $\sup_{[R_{3,\e_n}, R_{5,\e_n})}u_{1,\e_n}\ge c_0$ for all $n$. We will omit the subscript $\e_n$ for convenience. The following proof is similar to Step 1 of Lemma \ref{lemma2-4}. For $n$ large, there exist $b_n, d_n\in (R_3, R_5)$ such that $b_n<d_n$, $u_1(d_n)=c_0-1$, $u_1(b_n)=c_0-2$, $u_1\le c_0-1$ on $[R_3, d_n]$ and $u_1\ge c_0-2$ on $[b_n, d_n]$. Clearly, $u_1'(d_n)\ge 0$, $u_2<u_1<\ln\dd$ and so $F_1<F_2<0$ on $(R_3, d_n]$, which implies
\be\label{eq3-9-1}(ru_1'(r))'=r[(1+a_1)F_1-a_1F_2]<rF_1
<-\frac{1}{2}re^{u_1(r)}\;\;\text{on}\;\;(R_3, d_n].\ee
Then $0\le ru_1'(r)\le R_3u_1'(R_3)\le C$ for any $r\in[R_3, d_n]$, which yields $d_n-b_n\to \iy$. Consequently,
\begin{align*}
0&\le d_n u_1'(d_n)=b_n u_1'(b_n)+\int_{b_n}^{d_n}(ru_1')'dr
\le R_3u_1'(R_3)-\frac{1}{2}\int_{b_n}^{d_n}re^{u_1}dr\\
&\le C-\frac{1}{2}e^{c_0-2}\int_{b_n}^{d_n}rdr\to-\iy\;\;\text{as $n\to\iy$},
\end{align*}
a contradiction.

{\it Step 2.} By Step 1, for $\e>0$ sufficiently small, we have $u_2<u_1<\ln\dd$
on $(R_3, R_5)$, which implies that \eqref{eq3-9-1} holds on $(R_3, R_5)$ and so $ru_1'(r)$ is strictly decreasing on $[R_3, R_5)$.

{\it Step 3.} We prove that $R_{4,\e}\le R_{5,\e}$ for $\e>0$ sufficiently small.

Assume by contradiction that there exist a sequence $\e_n\downarrow 0$ such that
$R_{5,\e_n}<R_{4,\e_n}$. Again we omit the subscript $\e_n$ for convenience. Consequently, $R_5<\iy$ and $ru_1'(r)>-2-\vartheta$ for $r\in [R_3, R_5]$. Since $[a_2ru_1'+(1+a_1)ru_2']'=(A-B)rF_2<0$ on $[R_3, R_5]$, we have
$$ru_2'(r)\le \frac{a_2}{1+a_1}\left[R_3u_1'(R_3)+2+\vartheta\right]+R_3u_2'(R_3)=:l_n$$
uniformly for $r\in [R_3, R_5]$. Recalling \eqref{eq2-7} and \eqref{eq3-33}-\eqref{eq3-34}, it is easy to see that
\begin{align*}l:=&\lim_{n\to \iy}l_n\\
=&\frac{4a_2(B-A)}{(1+a_1)A}\tilde{\ga}
-\frac{2a_2(A-2B)}{(1+a_1)A}\tilde{N}_1-2\frac{A-2B}{A}\tilde{N}_2-2
+\frac{a_2\vartheta}{1+a_1}.\end{align*}

We claim $l<-2-\dd$. Recall $3A-4B>0$, $\tilde{\ga}>1$, $\vartheta<\dd$ and \eqref{eq3-1}. Clearly $l<-2-\frac{a_2}{1+a_1}\frac{A-B}{A}\tilde{\ga}<-2-\dd$ if $A\ge 2B$. Let us consider the remaining case $\frac{4}{3}B<A<2B$. By $\al_2>1$ and \eqref{eq2-6}, we easily obtain
\begin{align}\label{eq3-42}
\frac{a_2}{1+a_1}\tilde{\ga}
&>\frac{a_2}{1+a_1}\frac{4B-A}{3A-4B}\tilde{N}_1+\frac{4B-A}{3A-4B}\tilde{N}_2\nonumber\\
&>\frac{a_2}{1+a_1}\frac{2B-A}{A-B}\tilde{N}_1+\frac{2B-A}{A-B}\tilde{N}_2,
\end{align}
where we have used $\frac{4B-A}{3A-4B}>\frac{2B-A}{A-B}$. So we also get $l<-2-\frac{a_2}{1+a_1}\frac{A-B}{A}\tilde{\ga}<-2-\dd$.

Hence, for $n$ sufficiently large, $ru_2'(r)\le l_n<-2-\vartheta< ru_1'(r)$ for all $r\in [R_3, R_5]$, namely $u_2-u_1$ is strictly decreasing on $[R_3, R_5]$, which contradicts to $u_2(R_5)-u_1(R_5)=0$.

{\it Step 4.} Let $\al_1>1$, then $\vartheta=0$. We prove that $R_4< R_5$ for $\e>0$ sufficiently small.

Assume by contradiction that $R_4=\iy$ for some $\e>0$ sufficiently small. Then $R_5=\iy$ and so  $ru_1'(r)>-2$ for all $r\ge R_3$, which implies $\int_{R_3}^\iy re^{u_1}dr=\iy$. On the other hand, Step 2 shows that \eqref{eq3-9-1} holds on $(R_3, \iy)$, so
$$\iy=\frac{1}{2}\int_{R_3}^\iy re^{u_1}dr\le\limsup_{r\to\iy}[-tu_1'(t)]\Big|_{R_3}^r
\le R_3u_1'(R_3)+2\le C,$$
a contradiction. So $R_4<\iy$ for $\e>0$ sufficiently small. Then by repeating the argument of Step 3, we finally conclude that $R_4< R_5$ for $\e>0$ sufficiently small.
\end{proof}

\bl\label{lemma2-11}
For each $\e\in (0,\e_6)$, $u_{k,\e}(r)+2\ln r\le C$ uniformly for $r\in [R_{3,\e}, R_{5,\e})$ and $k=1, 2$. Furthermore, $\lim\limits_{\e\to 0}\int_{R_{3,\e}}^{R_{5,\e}}r e^{u_{2,\e}}dr=0$ and
{\allowdisplaybreaks
\begin{align}\label{eq3-39}&\lim_{\e\to 0}[a_2ru_{1,\e}'(r)+(1+a_1)ru_{2,\e}'(r)]\nonumber\\
=&-4a_2\frac{A-B}{A}\ga-2a_2\frac{A-2B}{A}N_1-2\frac{A-2B}{1+a_2}N_2-4\frac{A-B}{1+a_2}
\end{align}
}%
uniformly for all $r\in [R_{3,\e}, R_{5,\e})$.\el

\begin{proof} We separate the proof into three steps.

{\it Step 1.} For each $\e\in (0,\e_6)$, we claim that
\be\label{eq3-40}u_k(r)+2\ln r\le C\;\,\text{uniformly for}\;\,r\in [R_3, R_5).\ee

Lemma \ref{lemma2-10} shows that $u_2<u_1<\ln\dd$ and $(ru_1')'\le-\frac{1}{2}re^{u_1}$ on $[R_3, R_5)$. Hence
$$-2-\vartheta<ru_1'(r)\le R_3u_2'(R_3)-\frac{1}{2}\int_{R_3}^{r}re^{u_1}dr,
\quad\forall\,r\in (R_3, R_4),$$
which implies
$$\int_{R_3}^{R_4}r\left(e^{u_1}+e^{u_2}\right)dr\le 2\int_{R_3}^{R_4}re^{u_1}dr\le C.$$
This, together with \eqref{eq3-38}, gives
$$\lim_{\e\to 0}\int_{R_3}^{R_4}r\left[e^{2u_1}+e^{2u_2}+e^{u_1+u_2}\right]dr=0.$$
Then by repeating the argument of Step 2 in Lemma \ref{lemma2-4}, we have $u_k(r)+2\ln r\le C$ for all $r\in [R_3, R_4)$ and $k=1, 2$. If $R_4=R_5$, we are done. If $R_4<R_5$, by $ru_1'(r)<-2-\vartheta$ for $r\in (R_4, R_5)$, we conclude that
$$u_2(r)+2\ln r<u_1(r)+2\ln r\le u_1(R_4)+2\ln R_4\le C$$
for all $r\in [R_4, R_5)$. This proves \eqref{eq3-40}.

{\it Step 2.} Recalling \eqref{eq2-6} and $\tilde{\al}_2=\al_2-1>0$, we claim that for $\e>0$ sufficiently small,
\be\label{eq3-41}ru_2'(r)\le -2-\frac{3}{2}\tilde{\al}_2\;\,\text{uniformly for}\;\,r\in [R_3, R_5).\ee

By \eqref{eq3-40} we have
\be\label{eq3-41-1}\lim_{\e\to 0}\int_{R_3}^{R_5}r\left[e^{2u_1}+e^{2u_2}+e^{u_1+u_2}\right]dr=0.\ee
Consequently, integrating the Pohozaev identity \eqref{pohozaev} over $[R_3, r]$ gives
{\allowdisplaybreaks
\begin{align}\label{eq3-43}
J(ru_1'(r)+2, ru_2'(r)+2)&\le J(ru_1'(r)+2, ru_2'(r)+2)+(A-B)r^2\bigg[a_2e^{u_1}\nonumber\\
\quad+a_1e^{u_2}
-\frac{a_2(1+a_1)}{2}&e^{2u_1}
-\frac{a_1(1+a_2)}{2}e^{2u_2}+a_1a_2e^{u_1+u_2}\bigg]\nonumber\\
&=J(R_3u_1'(R_3)+2, R_3u_2'(R_3)+2)+o(1)
\end{align}
}%
uniformly for all $r\in [R_3, R_5)$ as $\e\to 0$.
On the other hand, since $[a_2ru_1'+(1+a_1)ru_2']'=(A-B)rF_2<0$ on $[R_3, R_5)$,
we have for any $r\in [R_3, R_5)$ that
{\allowdisplaybreaks
\begin{align}\label{eq3-44}
&a_2(ru_1'(r)+2)+(1+a_1)(ru_2'(r)+2)\nonumber\\
\le& a_2(R_3u_1'(R_3)+2)+(1+a_1)(R_3u_2'(R_3)+2)=:\eta_\e.
\end{align}
}%
Recalling \eqref{eq2-7} and \eqref{eq3-33}-\eqref{eq3-34}, we have
$$\eta:=\lim_{\e\to 0}\eta_\e
=-4a_2\frac{A-B}{A}\tilde{\ga}-2a_2\frac{A-2B}{A}\tilde{N}_1
-2\frac{A-2B}{1+a_2}\tilde{N}_2.$$
Using \eqref{eq3-42} if $A<2B$, we easily see $\eta<0$. Hence $\eta_\e<0$ for $\e>0$ sufficiently small. Note from \eqref{eq1-9} that
$$J(x, y)=\frac{a_2(A-B)}{2(1+a_1)}x^2+\frac{a_1}{2(1+a_1)}[a_2x+(1+a_1)y]^2.$$
This, together with \eqref{eq3-43}-\eqref{eq3-44} and $\eta_\e<0$, easily yields
$$[ru_1'(r)+2]^2\le [R_3u_1'(R_3)+2]^2+o(1)$$
and so $ru_1'(r)\ge -R_3 u_1'(R_3)-4+o(1)$ uniformly for any $r\in [R_3, R_5)$ as $\e\to 0$. Substituting this inequality into \eqref{eq3-44} and recalling \eqref{eq3-33}-\eqref{eq3-34}, we finally obtain
{\allowdisplaybreaks
\begin{align*}
ru_2'(r)&\le \frac{2a_2}{1+a_1}R_3u_1'(R_3)+R_3u_2'(R_3)+\frac{4a_2}{1+a_1}+o(1)\\
&=-\frac{2a_2(3A-4B)}{(1+a_1)A}\tilde{\ga}-\frac{2a_2(A-4B)}{(1+a_1)A}\tilde{N}_1
-2\frac{A-4B}{A}\tilde{N}_2-2+o(1)\\
&=-2\tilde{\al}_2-2+o(1) \;\;(\text{by \eqref{eq2-6}})
\end{align*}
}%
uniformly for any $r\in [R_3, R_5)$ as $\e\to 0$. This proves \eqref{eq3-41}.

{\it Step 3.} We prove $\lim\limits_{\e\to 0}\int_{R_{3,\e}}^{R_{5,\e}}r e^{u_{2,\e}}dr=0$ and \eqref{eq3-39}.

By \eqref{eq3-41}, $r^{2+\tilde{\al}_2}e^{u_2(r)}$ is strictly decreasing for $r\in [R_3, R_5)$, so Lemma \ref{lemma2-9} gives
$$\int_{R_3}^{R_5}re^{u_2}dr\le \frac{1}{\tilde{\al}_2}R_3^2e^{u_2(R_3)}\to 0\;\;\text{as}\;\,\e\to 0.$$
Then by integrating $[a_2ru_1'+(1+a_1)ru_2']'=(A-B)rF_2$ over $[R_3, r]$ for any $r\in [R_3, R_5)$ and recalling $|F_2|\le (1+a_2)e^{u_2}$, we easily obtain \eqref{eq3-39}. This completes the proof.
\end{proof}

Now we consider the cases $\al_1=1$ and $\al_1>1$ separately.

\subsection{The critical case $\al_1=1$}

In this subsection, we consider the critical case $\al_1=1$. Consequently,
we see from \eqref{eq2-4-1}, \eqref{eq2-5} and \eqref{eq2-7} that
\be\label{eq6-2}A-2B>0\;\;\text{and}\;\;
\tilde{\ga}=\frac{2B}{A-2B}\tilde{N}_1+\frac{2a_1(1+a_1)}{A-2B}\tilde{N}_2.\ee
The following lemma provides an evidence that this critical case is different from the generic case $\al_1>1$.

\bl\label{lemma3-1}There exists a small $\e_7\in (0, \e_6)$ such that for each $\e\in (0, \e_7)$, $R_{4,\e}=R_{5, \e}=R_\e^*=+\iy$.\el

\begin{proof} Assume by contradiction that $R_{4,\e_n}<+\iy$ for a sequence $\e_n\downarrow 0$. We will omit the subscript $\e_n$ for convenience. Then $R_4u_1'(R_4)=-2-\vartheta$.
This, together with \eqref{eq3-39} and \eqref{eq6-2}, gives
\begin{align}&\label{eq6-3}\lim_{n\to \iy}R_4u_2'(R_4)+2\nonumber\\
=&\frac{4a_2(B-A)}{(1+a_1)A}\tilde{\ga}
-\frac{2a_2(A-2B)}{(1+a_1)A}\tilde{N}_1
-2\frac{A-2B}{A}\tilde{N}_2+\frac{a_2\vartheta}{1+a_1}\nonumber\\
=&-\frac{1+a_2}{a_1}\tilde{\ga}+\frac{a_2}{1+a_1}\vartheta.\end{align}
Similarly, by Lemma \ref{lemma2-9} we have $\lim_{n\to \iy}R_3u_1'(R_3)+2=0$ and
\begin{align*}
\lim_{n\to\iy}R_3u_2'(R_3)+2=-\frac{2a_2}{1+a_1}(\tilde{\ga}+\tilde{N}_1)-2\tilde{N}_2
=-\frac{1+a_2}{a_1}\tilde{\ga}.
\end{align*}
Since we have assumed $R_4<\iy$, it follows from
\eqref{eq3-40} and \eqref{eq3-41} that $R_4^2e^{u_i(R_4)+u_j(R_4)}\le CR_4^{-2}\to 0$ for $1\le i,j\le 2$ and
$R_4^2e^{u_2(R_4)}<R_3^2e^{u_2(R_3)}\to 0$ as $n\to\iy$.
Combining these with \eqref{eq3-41-1}, we can repeat the proof of Lemma \ref{lemma2-7} to obtain (similar to \eqref{eq3-48})
{\allowdisplaybreaks
\begin{align*}
&a_2(A-B)R_4^2e^{u_1(R_4)}\\
=&J(R_3u_1'(R_3)+2, R_3u_2'(R_3)+2)-J(R_4u_1'(R_4)+2, R_4u_2'(R_4)+2)+o(1)\\
=&J\left(0,\; -\frac{1+a_2}{a_1}\tilde{\ga}
\right)-J\left(-\vartheta, \; -\frac{1+a_2}{a_1}\tilde{\ga}+\frac{a_2}{1+a_1}\vartheta\right)+o(1)\\
=&-\frac{a_2(A-B)}{2(1+a_1)}\vartheta^2+o(1)\;\;\text{as}\;\,n\to \iy,
\end{align*}
}%
which yields a contradiction with $\vartheta>0$.\end{proof}

Now we can finish the proof of Theorem \ref{th2-1} for $\al_1=1$.

\begin{proof}[Completion of the proof of Theorem \ref{th2-1} for $\al_1=1$.]
Let $\e\in (0, \e_7)$, then $R_{4, \e}=R_{5,\e}=R_\e^*=+\iy$. Since
Lemma \ref{lemma2-10} shows that $u_{2,\e}(r)<u_{1,\e}(r)<\ln\dd$ for any $r\in (R_{3,\e}, +\iy)$, we conclude that $(u_{1,\e}, u_{2,\e})$ is an entire solution. By Theorem A, there exists $(\al_{1,\e}, \al_{2,\e})\in\Om$ such that
$$u_{k,\e}(r)=-2\al_{k,\e}\ln r+O(1)\;\,\text{as}\;\,r\to\iy, \;\;k=1,2.$$
Consequently, $ru_{k,\e}'(r)\to -2\al_{k,\e}$ as $r\to\iy$. Then Lemma \ref{lemma2-10} and the definition of $R_{4,\e}$ yield $-2-\vartheta\le -2\al_{1,\e}<R_{3,\e}u_{1,\e}'(R_{3,\e})$, namely
$$2+\vartheta\ge\limsup_{\e\to 0}2\al_{1,\e}\ge\liminf_{\e\to 0}2\al_{1,\e}\ge-\lim_{\e\to 0}R_{3,\e}u_{1,\e}'(R_{3,\e})=2.$$
Since $\vartheta\in (0, \dd)$ can be taken apriori arbitrary small, we conclude that
$$\lim_{\e\to 0}\al_{1,\e}=\al_1=1.$$
This, together with \eqref{eq3-39}, easily implies{\allowdisplaybreaks
\begin{align*}&a_2+(1+a_1)\lim_{\e\to 0}\al_{2,\e}\\
=&2a_2\frac{A-B}{A}\ga+a_2\frac{A-2B}{A}N_1
+\frac{A-2B}{1+a_2}N_2+2\frac{A-B}{1+a_2},\end{align*}
}so
{\allowdisplaybreaks
\begin{align*}
\lim_{\e\to 0}\al_{2,\e}&=\frac{2a_2}{1+a_1}\frac{A-B}{A}\tilde{\ga}
+\frac{a_2}{1+a_1}\frac{A-2B}{A}\tilde{N}_1
+\frac{A-2B}{A}\tilde{N}_2+1\\
&=\frac{1+a_2}{2a_1}\tilde{\ga}+1\quad\text{(by \eqref{eq6-3})}\\
&=\al_2. \quad\text{(by \eqref{eq2-4})}
\end{align*}
}%

By Step 3 in the proof of Lemma \ref{lemma2-11}, we have $\int_{R_{3,\e}}^\iy r F_2dr\to 0$ as $\e\to 0$.
Consequently, by integrating $(ru_{1,\e}')'=(1+a_1)r F_1-a_1 rF_{2}$ over $(R_{3,\e}, +\iy)$, we deduce from $-2\al_{1,\e}-R_{3,\e}u_{1,\e}'(R_{3,\e})\to 0$ that $\int_{R_{3,\e}}^\iy r F_1dr\to 0$ as $\e\to 0$. Then by \eqref{eq5-10-2} we conclude that
\be\label{eq6-4}\lim_{\e\to 0}\int_{R_{3,\e}}^\iy r e^{u_{1,\e}}dr=0.\ee

Observe from \eqref{eq3-27} and \eqref{eq3-41} that $u_{2,\e}'(r)<0$ for all $r>R_{2,\e}$. Besides, Lemma \ref{lemma2-3} shows that $\sup_{[0, R_{1,\e}]}u_{2,\e}\to-\iy$. Combining these with \eqref{eq3-13}, we conclude $\sup_{\R^2}u_{2,\e}\to -\iy$ as $\e\to 0$.
This completes the proof.
\end{proof}

\br\label{remark2-1}
In Theorem C where bubbling solutions of type I are constructed,
we assumed $\al_1>1$, which plays an essential role in the proof of Theorem C (see \cite{CL}). In particular,
the conclusion $(\al_{1,\e}, \al_{2,\e})\to (\al_1, \al_2)$
is a corollary of \eqref{eq6-4} in the proof of Theorem C.
However, for the critical case $\al_1=1$ studied here, the idea used in Theorem C can not be applied and we have to argue the other way around: the conclusion \eqref{eq6-4} is a consequence of $(\al_{1,\e}, \al_{2,\e})\to (\al_1, \al_2)$.\er

\subsection{The generic case $\al_1>1$}

In this subsection, we consider the generic case $\al_1>1$. Then $\vartheta=0$ and $R_{4,\e}<R_{5,\e}$. The following lemma also provides an evidence that this case is different from the critical case $\al_1=1$.

\bl\label{lemma2-12} Recalling $\al_1>1$ in \eqref{eq2-5}, there holds (compare to \eqref{eq6-4})
\begin{align}\label{eq3-45}\lim_{\e\to 0}\int_{R_{3,\e}}^{R_{5,\e}}r e^{u_1}dr
&=\frac{4}{1+a_1}\left(\frac{2B-A}{A}\tilde{\ga}
+\frac{2B}{A}\tilde{N}_1+\frac{2a_1}{1+a_2}\tilde{N}_2\right)\nonumber\\
&=\frac{4(\al_1-1)}{1+a_1}.\end{align}\el

\begin{proof}
We separate the proof into five steps.

{\it Step 1.} We claim that
\be\label{eq3-46}\lim_{\e\to 0}R_{4,\e}^2e^{u_{1,\e}(R_{4,\e})}=\frac{E^2}{2(1+a_1)},\ee
where $E>0$ is defined by
\begin{align}\label{eq3-47}E:=&\lim_{\e\to 0}R_{3,\e}u_{1,\e}'(R_{3,\e})+2\\
=&2\frac{2B-A}{A}\tilde{\ga}
+\frac{4B}{A}\tilde{N}_1+\frac{4a_1}{1+a_2}\tilde{N}_2=2(\al_1-1). \nonumber\end{align}

Recalling \eqref{eq3-39} and $R_4u_1'(R_4)=-2$, there holds
$$F:=\lim_{\e\to 0}R_4u_2'(R_4)+2=-\frac{4a_2(A-B)}{(1+a_1)A}\tilde{\ga}
-\frac{2a_2(A-2B)}{(1+a_1)A}\tilde{N}_1-2\frac{A-2B}{A}\tilde{N}_2.$$
Again, \eqref{eq3-40} and \eqref{eq3-41} imply $R_4^2e^{u_i(R_4)+u_j(R_4)}\le CR_4^{-2}\to 0$ for $1\le i,j\le 2$ and
\be\label{eq3-51}R_4^2e^{u_2(R_4)}<R_3^2e^{u_2(R_3)}\to 0\;\;\text{as}\;\;\e\to 0.\ee Combining these with \eqref{eq3-41-1}, we can repeat the proof of Lemma \ref{lemma2-7} to obtain (similar to \eqref{eq3-48})
{\allowdisplaybreaks
\begin{align*}
&a_2(A-B)R_4^2e^{u_1(R_4)}\\
=&J(R_3u_1'(R_3)+2, R_3u_2'(R_3)+2)-J(R_4u_1'(R_4)+2, R_4u_2'(R_4)+2)+o(1)\\
=&J\bigg(E,\; \underbrace{-\frac{2a_2}{1+a_1}(\tilde{\ga}+\tilde{N}_1)-2\tilde{N}_2}_{=:G}
\bigg)-J(0, F)+o(1)\\
=&J\left(-2\tilde{\ga}-\frac{2a_1}{1+a_2}G,\; G\right)-J\left(0,\; \frac{A-2B}{A}G-\frac{2a_2}{1+a_1}\tilde{\ga}\right)+o(1)\\
=&J\left(2\tilde{\ga},\, G\right)-J\left(0, \; \frac{A-2B}{A}G-\frac{2a_2}{1+a_1}\tilde{\ga}\right)+o(1)\;\,(\text{by \eqref{eq2-8}})\\
=&\frac{a_2(A-B)}{2(1+a_1)}\left(\frac{2a_1}{1+a_2}G+2\tilde{\ga}\right)^2+o(1)\\
=&\frac{a_2(A-B)}{2(1+a_1)}E^2+o(1)\;\;\text{as}\;\,\e\to 0.
\end{align*}
}%
Hence \eqref{eq3-46} holds.

{\it Step 2.} For $\e>0$ sufficiently small, we consider the scaled functions
$$\tu_{k}(r)=\tu_{k,\e}(r):=u_{k,\e}(R_{4,\e}r)+2\ln R_{4,\e}\;\,\text{for}\;\, k=1,2,$$
where $\frac{R_{3,\e}}{R_{4,\e}}\le r<\frac{R_{5,\e}}{R_{4,\e}}$.
Then $(\tu_1, \tu_2)$ satisfies
\begin{equation}\label{eq3-49}
\begin{cases}
\tu_1''+\frac{1}{r}\tu_1' =(1+a_1)\left((1+a_1)R_4^{-2}e^{2\tu_1}-e^{\tu_1}
-a_1R_4^{-2}e^{\tu_1+\tu_2}\right)\\
\qquad\qquad\qquad-a_1\left((1+a_2)R_4^{-2}e^{2\tu_2}-e^{\tu_2}
-a_2R_4^{-2}e^{\tu_1+\tu_2}\right),\\
\tu_2''+\frac{1}{r}\tu_2' =(1+a_2)\left((1+a_2)R_4^{-2}e^{2\tu_2}-e^{\tu_2}
-a_2R_4^{-2}e^{\tu_1+\tu_2}\right)\\
\qquad\qquad\qquad-a_2\left((1+a_1)R_4^{-2}e^{2\tu_1}-e^{\tu_1}
-a_1R_4^{-2}e^{\tu_1+\tu_2}\right).
\end{cases}
\end{equation}
By Lemma \ref{lemma2-10}, it is easy to see that $u_1(r)+2\ln r\le u_1(R_4)+2\ln R_4$ for any $r\in [R_3, R_5)$. So for any $\frac{R_{3}}{R_{4}}< r<\frac{R_{5}}{R_{4}}$ we have
\be\label{eq3-50}\tu_2(r)+2\ln r<\tu_1(r)+2\ln r\le \tu_1(1)=\ln\frac{E^2}{2(1+a_1)}+o(1).\ee
Moreover, \eqref{eq3-51} gives $\tu_2(1)\to-\iy$ as $\e\to 0$.

{\it Step 3.} We claim that
$$\frac{R_3}{R_4}\to 0\;\;\text{and}\;\;\frac{R_5}{R_4}\to \iy\;\;\text{as}\;\;\e\to 0.$$

Recall from Lemma \ref{lemma2-10} that
$-2\le r\tu_1'(r)=R_4 ru_1'(R_4r)\le R_3u_1'(R_3)\le C$ for $r\in [\frac{R_3}{R_4}, 1]$.
By the mean value theorem, we have
\begin{align*}
\tu_1(1)-\ln \left(R_3^2e^{u_1(R_3)}\right)-2\ln \frac{R_4}{R_3}=\tu_1(1)-\tu_1\left(\frac{R_3}{R_4}\right)\le C\left(\frac{R_4}{R_3}-1\right).
\end{align*}
Recalling $R_3^2e^{u_1(R_3)}\to 0$ and \eqref{eq3-50}, we conclude that $\frac{R_3}{R_4}\to 0$ as $\e\to 0$.

By \eqref{eq3-50} and $R_4^{-2}e^{\tu_k(r)}=e^{u_k(R_4 r)}<1$ for any $r\in [\frac{R_3}{R_4}, \frac{R_5}{R_4})$, it is easy to deduce from \eqref{eq3-49} that
$|(r\tu_k')'(r)|\le C/r$ for all $r\in [\frac{R_3}{R_4}, \frac{R_5}{R_4})$. Consequently,
$\tu_1$ is uniformly bounded in $C_{loc}((\frac{R_3}{R_4}, \frac{R_5}{R_4}))$ and $\tu_2\to -\iy$ uniformly on any compact subset $K\subset\subset (\frac{R_3}{R_4}, \frac{R_5}{R_4})$. This, together with the definition of $R_5$, yields $\frac{R_5}{R_4}\to\iy$ as $\e\to 0$.

{\it Step 4.} We claim that $\tu_1\to \om_1$ in $C_{loc}^2((0, \iy))$ as $\e\to 0$, where
\be\label{eq3-52-1}\om_1(r)=\ln\frac{2E^2r^{E-2}}{(1+a_1)(1+r^E)^2}\;\;\text{for}\;\;r\in
(0, \iy), \ee
and $E$ is seen in \eqref{eq3-47}. Consequently,
\be\label{eq3-52-2}\int_{0}^\iy re^{\om_1}dr=\frac{2}{1+a_1}E.\ee

By Step 3, up to a subsequence, we may assume that $\tu_1\to \om_1$ in $C_{loc}^2((0, \iy))$, where $\om_1$ satisfies
\be\label{eq3-53}
\begin{cases}
\begin{split}
&\om_1''+\frac{1}{r}\om_1'=-(1+a_1)e^{\om_1},\\
&\om_1(r)+2\ln r\le \om_1(1)=\ln\frac{E^2}{2(1+a_1)},\\
\end{split}\quad\text{for}\;\,r\in (0, \iy).
\end{cases}
\ee
Since $\om_1'(1)=-2$ and $r\om_1'(r)$ is strictly decreasing on $(0, \iy)$,
it is easy to prove that $\int_{0}^\iy re^{\om_1}dr<\iy$.
Recalling $-2\le r\tu_1'(r)=R_4ru_1'(R_4r)\le R_3u_1'(R_3)=E-2+o(1)$ for all $r\in [R_3/R_4, 1]$, we easily obtain
$$2\ga_1:=\lim_{r\to 0}r\om_1'(r)\in (-2, E-2].$$ In conclusion,
$\om_1$ is a radial solution of the Liouville equation with singular sources
$$\Delta v+(1+a_1)e^v=4\pi\ga_1\dd_0\;\;\text{in}\;\,\R^2, \quad\int_0^\iy re^vdr<\iy.$$
Since $\ga_1>-1$,  again by the classification result due to Prajapat and Tarantello \cite{PT}, there holds
$$\om_1(r)+\ln(1+a_1)=\ln\frac{8\la(1+\ga_1)^2 r^{2\ga_1}}{(1+\la r^{2\ga_1+2})^2}$$
for some constant $\la>0$. By $\om_1'(1)=-2$ and $\om_1(1)=\ln\frac{E^2}{2(1+a_1)}$, a direct computation gives $\ga_1=\frac{E-2}{2}$ and $\la=1$. This proves \eqref{eq3-52-1} and \eqref{eq3-52-2}.
Clearly, the above argument also shows that $\tu_1\to \om_1$ in $C_{loc}^2((0,\iy))$ as $\e\to 0$ (i.e., not only along a subsequence).

{\it Step 5.} We prove \eqref{eq3-45}.

Given any $\mu\in (0, \dd)$. By \eqref{eq3-52-1}-\eqref{eq3-52-2}, there exist small constant $b_{\mu}\in (0, 1)$ and large constant $d_{\mu}>1$ such that
{\allowdisplaybreaks
\begin{align*}
&b_\mu^2 e^{\om_1(b_\mu)}+ d_\mu^2 e^{\om_1(d_\mu)}+
\left|\int_{b_\mu}^{d_\mu}r e^{\om_1}dr-\frac{2E}{1+a_1}\right|<\frac{\mu}{2},\\
&b_\mu \om_1'(b_\mu)+2\ge\frac{2}{3}E,\quad d_\mu \om_1'(d_\mu)+2\le-\frac{2}{3}E.
\end{align*}
}%
Consequently, since $\tu_1\to \om_1$ in $C^2([b_\mu, d_\mu])$, there exists sufficiently small $\e_\mu>0$ such that for each $\e\in (0, \e_\mu)$, we have
{\allowdisplaybreaks
\begin{align*}
&b_\mu^2 e^{\tu_1(b_\mu)}+ d_\mu^2 e^{\tu_1(d_\mu)}+
\left|\int_{b_\mu}^{d_\mu}r e^{\tu_1}dr-\frac{2E}{1+a_1}\right|<\mu,\\
&b_\mu \tu_1'(b_\mu)+2\ge\frac{1}{2}E,\quad d_\mu \tu_1'(d_\mu)+2\le-\frac{1}{2}E.
\end{align*}
}%
Recalling that $r\tu_1'(r)=R_4r u_1'(R_4r)$ is strictly decreasing on $(\frac{R_3}{R_4}, \frac{R_5}{R_4})$, we obtain $r\tu'(r)+2\le-\frac{1}{2}E$ for all $r\in [d_\mu, \frac{R_5}{R_4})$, which implies that $r^{2+\frac{1}{2}E}e^{\tu_1(r)}$ decreases on $[d_\mu, \frac{R_5}{R_4})$ and so
\[
(R_4r)^2e^{u_1(R_4r)}=r^2e^{\tu_1(r)}\le d_\mu^2 e^{\tu_1(d_\mu)}<\mu\;\;\text{for any}\;\;r\in\left[d_\mu, \tfrac{R_5}{R_4}\right).
\]
Furthermore,
$$\int_{d_\mu R_4}^{R_5}re^{u_1}dr=\int_{d_\mu}^{\frac{R_5}{R_4}}re^{\tu_1}dr\le
\frac{2}{E}d_\mu^2 e^{\tu_1(d_\mu)}<\frac{2}{E}\mu.$$
Similarly, by $r\tu'(r)+2\ge\frac{1}{2}E$ for all $r\in [\frac{R_3}{R_4}, b_\mu]$, we can prove
$$\int_{R_3}^{b_\mu R_4}re^{u_1}dr=\int_{\frac{R_3}{R_4}}^{b_\mu}re^{\tu_1}dr\le
\frac{2}{E}b_\mu^2 e^{\tu_1(b_\mu)}<\frac{2}{E}\mu.$$
Since $\int_{b_\mu R_4}^{d_\mu R_4}r e^{u_1}dr=\int_{b_\mu}^{d_\mu}r e^{\tu_1}dr$, \eqref{eq3-45} follows immediately. This completes the proof.
\end{proof}

\bl\label{lemma2-13}
Given any $t_\e\in (R_{4,\e}, R_{5, \e}]$ such that $t_\e<\iy$ and $\frac{t_\e}{R_{4,\e}}\to \iy$ as $\e\to 0$. Then $\lim\limits_{\e\to 0}t_\e^2 e^{u_{k,\e}(t_\e)}=0$ for $k=1, 2$ and
$$\lim_{\e\to 0}t_\e u_{1,\e}'(t_\e)=-2\al_1,\quad \lim_{\e\to 0}t_\e u_{2,\e}'(t_\e)=-2\al_2,$$
where $\al_1, \al_2$ are seen in \eqref{eq2-5}-\eqref{eq2-6}.
\el

\begin{proof} Since $\frac{t}{R_4}\to \iy$, by repeating the argument of Step 5 in Lemma \ref{lemma2-12}, it is easy to prove that $t^2e^{u_2(t)}\le t^2 e^{u_1(t)}\to 0$ and
$$\int_{R_3}^{t}re^{u_1}dr\to \frac{2E}{1+a_1}\quad\text{as}\;\;\e\to 0.$$
Recalling $\int_{R_3}^{R_5}re^{u_2}dr\to 0$ (see Lemma \ref{lemma2-11}), \eqref{eq3-41-1}, \eqref{eq3-33}-\eqref{eq3-34} and \eqref{eq3-47}, we conclude
{\allowdisplaybreaks
\begin{align*}
\lim_{\e\to 0}tu_1'(t)&=\lim_{\e\to 0}\left[R_3u_1'(R_3)-(1+a_1)\int_{R_3}^t re^{u_1}dr\right]=E-2-2E\\
&=-2\al_1,\\
\lim_{\e\to 0}tu_2'(t)&=\lim_{\e\to 0}\left[R_3u_2'(R_3)+a_2\int_{R_3}^t re^{u_1}dr\right]\\
&=-\frac{2a_2}{1+a_1}(\tilde{\ga}+\tilde{N}_1)-
2\tilde{N}_2-2+\frac{2a_2}{1+a_1}E\\
&=-\frac{2a_2}{1+a_1}\frac{3A-4B}{A}\tilde{\ga}-\frac{2a_2}{1+a_1}\frac{A-4B}{A}\tilde{N}_1
-2\frac{A-4B}{A}\tilde{N}_2-2\\
&=-2\al_2.
\end{align*}
}%
This completes the proof.
\end{proof}

\bl\label{lemma2-14}There exists a small $\e_7\in (0, \e_6)$ such that for each $\e\in (0, \e_7)$, there holds $ru_{2,\e}'(r)<-2-\tilde{\al}_2$ for all $r\in [R_{3,\e}, R_{\e}^*)$. Consequently,
\be\label{eq3-55-1}\lim_{\e\to 0}\int_{R_{3,\e}}^{R_\e^*}re^{u_{2,\e}}dr\le\frac{1}{\tilde{\al}_2}\lim_{\e\to 0}R_{3,\e}^2e^{u_{2,\e}(R_{3,\e})}= 0.\ee
\el

\begin{proof} Recall that $\tilde{\al}_k=\al_k-1>0$ for $k=1, 2$ and \eqref{eq3-41} gives that $ru_{2,\e}'(r)\le -2-\frac{3}{2}\tilde{\al}_2$ for $r\in [R_{3,\e}, R_{5,\e})$. Assume by contradiction that there exist a sequence $\e_n\downarrow 0$ and $t_n\in [R_{5, \e_n}, R_{\e_n}^*)$ such that $t_n u_{2,\e_n}'(t_n)=-2-\tilde{\al}_2$ and $ru_{2,\e}'(r)<-2-\tilde{\al}_2$ for $r\in [R_{3,\e_n}, t_n)$. We will omit the subscript $\e_n$ for convenience.
Clearly $R_5<t_n<R^*$. Since $\frac{R_5}{R_4}\to\iy$, Lemma \ref{lemma2-13} yields $\lim_{n\to \iy}R_5^2e^{u_k(R_5)}= 0$ for $k=1, 2$ and
\be\label{eq3-54}\lim_{n\to \iy}R_5 u_1'(R_5)=-2-2\tilde{\al}_1,\quad \lim_{n\to \iy}R_5 u_{2}'(R_5)=-2-2\tilde{\al}_2.\ee
Since $ru_2'(r)<-2-\tilde{\al}_2$ for $r\in [R_3, t_n)$, we have
\be\label{eq3-55}\int_{R_3}^{t_n}re^{u_2}dr\le\frac{1}{\tilde{\al}_2}R_3^2e^{u_2(R_3)}\to 0\;\;\text{as}\;\,n\to\iy.\ee
Recall from \eqref{eq3-38} that $\sup_{[R_3, R_5]}u_1\to-\iy$ as $n\to\iy$. Since $u_2\le u_1$ on $[R_3, R_5]$, we have $\sup_{[R_3, t_n]}u_2\to-\iy$ as $n\to\iy$.

{\it Step 1.} We claim that $u_1'<0$ on $[R_5, t_n]$ for $n$ sufficiently large. Consequently, $\sup_{[R_3, t_n]}u_1\to-\iy$ as $n\to\iy$.

Suppose that, up to a subsequence, there exists $r_n\in [R_5, t_n]$ such that $u_1'(r_n)=0$ and $u_1'(r)<0$ for $r\in [R_5, r_n)$. Then for $n$ sufficiently large, $u_1<\ln\dd$ and so $F_1<0$, $-F_2<2e^{u_2}$ on $[R_5, r_n]$, which imply that $(ru_1')'=r[(1+a_1)F_1-a_1F_2]\le 2a_1re^{u_2}$ on $[R_5, r_n]$. Consequently,
\begin{align*}
0-R_5u_1'(R_5)=\int_{R_5}^{r_n}(ru_1')'dr\le2a_1\int_{R_5}^{r_n}re^{u_2}dr
\le2a_1\int_{R_3}^{t_n}re^{u_2}dr\to 0
\end{align*}
as $n\to\iy$, which yields a contradiction with \eqref{eq3-54}.

{\it Step 2.} We claim that $ru_1'(r)<-2-\tilde{\al}_1$ on $[R_5, t_n]$ for $n$ large enough. Consequently,
\be\label{eq3-56}\int_{R_5}^{t_n}re^{u_1}dr\le\frac{1}{\tilde{\al}_1}R_5^2e^{u_1(R_5)}\to 0\;\;\text{as}\;\,n\to\iy.\ee

By Step 1 and \eqref{eq3-54}-\eqref{eq3-55}, we may take $n$ large enough such that $(ru_1')'\le 2a_1re^{u_2}$ on $[R_5, t_n]$,  $R_5u_1'(R_5)<-2-\frac{3}{2}\tilde{\al}_1$ and $2a_1\int_{R_5}^{t_n}re^{u_2}dr<\frac{1}{2}\tilde{\al}_1$.  Then for any $r\in [R_5, t_n]$, we have $$ru_1'(r)\le R_5u_1'(R_5)+2a_1\int_{R_5}^{r}te^{u_2}dt<-2-\tilde{\al}_1.$$

{\it Step 3.} We complete the proof.

Similarly as Steps 1-2, we may take $n$ large enough such that $(ru_2')'=r[(1+a_2)F_2-a_2F_1]\le 2a_2re^{u_1}$ on $[R_5, t_n]$, so we conclude from \eqref{eq3-54} and \eqref{eq3-56} that
\begin{align*}
-2-\tilde{\al}_2=t_n u_2'(t_n)\le R_5u_2'(R_5)+2a_1\int_{R_5}^{t_n}re^{u_1}dr\to -2-2\tilde{\al}_2
\end{align*}
as $n\to\iy$, a contradiction with $\tilde{\al}_2>0$. This completes the proof.
\end{proof}

We are now in a position to complete the proof of Theorem \ref{th2-1} for $\al_1>1$.

\begin{proof}[Completion of the proof of Theorem \ref{th2-1} for $\al_1>1$]
For each $\e\in (0, \e_7)$, we take a number $t_\e\in (R_{4,\e}, R_{5, \e})$ such that
$\frac{t_\e}{R_{4,\e}}\to\iy$ as $\e\to 0$. Then Lemma \ref{lemma2-10} gives $u_{2,\e}(t_\e)<u_{1,\e}(t_\e)<\ln\dd$. Moreover, by Lemma \ref{lemma2-13} we have
$$\lim_{\e\to 0}t_\e^2 e^{u_{k,\e}(t_\e)}=0,\; \lim_{\e\to 0}t_\e u_{1,\e}'(t_\e)=-2\al_1,\; \lim_{\e\to 0}t_\e u_{2,\e}'(t_\e)=-2\al_2.$$
Combining these with \eqref{eq3-55-1}, we can repeat the argument of Steps 1-2 in Lemma \ref{lemma2-14} to conclude the existence of $\e_8\in (0, \e_7)$ such that $ru_{1,\e}'(r)<-2-\tilde{\al}_1$ on $[t_\e, R_{\e}^*)$ for any $\e\in (0, \e_8)$. Consequently,
\be\label{eq3-57}\int_{t_\e}^{R_\e^*}re^{u_{1,\e}}dr\le\frac{1}{\tilde{\al}_1}t_{\e}^2
e^{u_1(t_{\e})}\to 0\;\,\text{as}\;\,\e\to 0. \ee

Now we consider $\e\in (0, \e_8)$.
Recall from Lemma \ref{lemma2-14} that $ru_{2,\e}'(r)<-2-\tilde{\al}_2$ on $[R_{3,\e}, R_{\e}^*)$. It turns out that both $u_{1,\e}$ and $u_{2,\e}$ decrease on $[t_\e, R_\e^*)$, which implies $u_{k,\e}(r)<u_{k,\e}(t_\e)<\ln\dd$ for any $r\in (t_\e, R_\e^*)$ and $k=1, 2$. By the definition of $R_\e^*$, we conclude that $R_\e^*=\iy$, namely $(u_{1,\e}, u_{2,\e})$ is an entire solution for $\e\in (0, \e_8)$. By Theorem A, there exists $(\al_{1,\e}, \al_{2,\e})\in\Om$ such that
$$u_{k,\e}(r)=-2\al_{k,\e}\ln r+O(1)\;\,\text{as}\;\,r\to\iy, \;\;k=1,2.$$
Recall that $|(ru_{k,\e}')'|=r|(1+a_k)F_k-a_k F_{3-k}|\le C r(e^{u_{1,\e}}+e^{u_{2,\e}})$ for $k=1, 2$. By \eqref{eq3-55-1}, \eqref{eq3-57} and $R_\e^*=\iy$, we obtain
$$\left|-2\al_{k,\e}-t_\e u_{k,\e}'(t_\e)\right|\le C\int_{t_\e}^\iy r(e^{u_{1,\e}}+e^{u_{2,\e}})dr\to 0\;\,\text{as}\;\,\e\to 0$$
for $k=1, 2$. Therefore, $\lim\limits_{\e\to 0}(\al_{1,\e}, \al_{2,\e})=(\al_1, \al_2)$.

Observe from \eqref{eq3-27} and Lemma \ref{lemma2-14} that $u_{2,\e}'(r)<0$ for all $r>R_{2,\e}$. Besides, Lemma \ref{lemma2-3} shows that $\sup_{[0, R_{1,\e}]}u_{2,\e}\to-\iy$. Combining these with \eqref{eq3-13}, we conclude $\sup_{\R^2}u_{2,\e}\to -\iy$ as $\e\to 0$.

This completes the proof.
\end{proof}

\medskip
\noindent{\bf Acknowledgements} The authors thank the referee very much for careful reading and valuable comments.

\end{document}